\documentclass[11pt]{amsart}

\usepackage{a4wide}
\usepackage{amssymb}
\usepackage{mathrsfs}
\usepackage{mathtools}
\usepackage{enumerate}
\usepackage{titletoc}
\usepackage[colorlinks=true, urlcolor=blue, linkcolor=blue, citecolor=green]{hyperref}
\usepackage[nameinlink]{cleveref}
\usepackage{mathscinet}
\usepackage{esint}

\theoremstyle{plain}
\newtheorem{theorem}{Theorem}[section]
\newtheorem{lemma}[theorem]{Lemma}
\newtheorem{corollary}[theorem]{Corollary}
\newtheorem{proposition}[theorem]{Proposition}
\newtheorem*{conjecture*}{Conjecture}
\newtheorem*{question*}{Question}

\theoremstyle{definition}
\newtheorem{definition}[theorem]{Definition}
\newtheorem{remark}[theorem]{Remark}

\numberwithin{equation}{section}

\makeatletter
\@namedef{subjclassname@2020}{\textup{2020} Mathematics Subject Classification}
\makeatother

\title[]{A priori estimates for solutions of degenerate fully nonlinear elliptic equations with $L^p$ data}

\author{Hongsoo Kim}
\address{Department of Mathematical Sciences, Seoul National University, Seoul 08826, Republic of Korea}
\email{rlaghdtn98@snu.ac.kr}

\author{Se-Chan Lee}
\address{School of Mathematics, Korea Institute for Advanced Study, Seoul 02455, Republic of Korea}
\email{sechan@kias.re.kr}

\subjclass[2020]{35B45, 35D40, 35J60, 35J70}

\keywords{Degenerate equations; viscosity solutions; a priori estimates}

\thanks{Se-Chan Lee has been supported by the KIAS Individual Grant (No. MG099001) at Korea Institute for Advanced Study. H. Kim was supported by the National Research Foundation of Korea(NRF) grant funded by the Korea government [Grant No. 2022R1A2C1009312].}

\begin{document}

\begin{abstract}
We establish a priori regularity estimates for viscosity solutions of degenerate fully nonlinear elliptic equations with integrable right-hand sides. When the nonhomogeneous term belongs to $L^p$ with $p>n$, we prove optimal interior $C^{1,\alpha}$ estimates. In the critical case, we obtain a log-Lipschitz modulus of continuity under the Lorentz condition $f\in L^{n,1}$. We utilize sliding paraboloid or cusp methods to develop uniform H\"older estimates for equations that are elliptic only in suitable gradient regimes. Finally, we establish an approximation lemma for integrable right-hand sides via a corrector argument, which allows us to deduce the corresponding Schauder-type estimates.
\end{abstract}

\maketitle

%
%

\section{Introduction}
This paper is concerned with the regularity of viscosity solutions $u$ of the following degenerate fully nonlinear elliptic equation
\begin{equation*}
    |Du|^{\gamma}F(D^2u)=f \quad \text{in $B_1 \subset \mathbb{R}^n$,}
\end{equation*}
where $\gamma > 0$ and $F$ is a uniformly elliptic fully nonlinear operator. In the classical uniformly elliptic case $\gamma=0$ and $f=0$, the celebrated Krylov--Safonov theory~\cite{KS79, KS80} yields that $u \in C^{1, \alpha_0}_{\mathrm{loc}}(B_1)$ for some  $\alpha_0=\alpha_0(n, \lambda,\Lambda) \in (0,1)$ which will be fixed throughout the paper; see Theorem~\ref{thm-KS} for details.
Moreover, by the classical Evans--Krylov theory~\cite{Eva82, Kry82, Kry83}, if $F$ is convex, then $u \in C^{1,1}_{\mathrm{loc}}(B_1)$ (in fact, $C^{2,\alpha}_{\mathrm{loc}}(B_1)$); thus, we can take $\alpha_0=1$ in the convex setting.
In the degenerate case $\gamma>0$, since the coefficient $|Du|^{\gamma}$ may vanish at critical points of $u$, the equation is no longer uniformly elliptic. Nevertheless, it is by now well understood that viscosity solutions $u$ still enjoy interior $C^{1, \alpha}$ estimates in terms of the $L^{\infty}$-norm of $f$. Imbert--Silvestre~\cite{IS13} established the $C^{1, \alpha}$ regularity result for some universal exponent $\alpha \in (0,1)$ with the uniform estimate
\begin{equation*}
    \|u\|_{C^{1,\alpha}(\overline{B_{1/2}})}\leq C(\|u\|_{L^{\infty}(B_1)}+\|f\|_{L^{\infty}(B_1)}^{\frac{1}{1+\gamma}}).
\end{equation*}
Later, Ara\'ujo--Ricarte--Teixeira~\cite{ART15} captured the sharp range of gradient H\"older exponents, namely,
\begin{equation*}
    \alpha \in (0, \alpha_0) \cap \left(0, \frac{1}{1+\gamma}\right].
\end{equation*}
 We also refer to \cite{APPT22, BBLL24int, BKO25, BPRT20, DF21, FVZ21, Nas25, dSR20} for interior regularity results for equations with different degenerate structures, \cite{AS23, BBLL24, BD14, BKO252, BV21} for boundary regularity results under Dirichlet, Neumann, or oblique conditions, \cite{AJ25, LLY24, LLYZ25} for the parabolic counterparts, and \cite{ART17, HPRS24, dSV21, dSV21b} for the associated obstacle-type problems.\newline

The purpose of this paper is to develop \emph{a priori} estimates for solutions in terms of the nonhomogeneous term $f$ only through its integrability norm. Let us begin with a simple observation that suggests a natural guess for the interior regularity results in the case of $f \in L^p$ with $p \in [n, \infty)$. Consider the model equation
	\begin{equation*}
	    |Du|^{\gamma} \Delta u=f
	\end{equation*}
	and let $u(x)=|x|^{1+\alpha}$. Then, up to a multiplicative constant, we have
	\begin{equation*}
	    f(x)\simeq |x|^{\alpha(1+\gamma)-1}.
	\end{equation*}
	Thus, it is immediate to check that $f\in L^p$ if and only if
    \begin{equation*}
        \alpha>\frac{1-n/p}{1+\gamma}.
    \end{equation*}
	This computation shows that one cannot expect $C^{1, \alpha}$ regularity of solutions for an exponent larger than
	$(1-n/p)/(1+\gamma)$ under the sole assumption that $f\in L^p$. Indeed, this exponent recovers the optimal value $1/(1+\gamma)$ obtained in \cite{ART15} when $p \to \infty$, while the borderline case $p=n$ corresponds to the critical regime where one cannot expect a positive H\"older exponent for the gradient in general. We also note that a solution $u$ of $F(D^2u)=f \in L^n$ need not be Lipschitz continuous, while $u$ is $C^{\alpha}$ for any $\alpha \in (0,1)$; see \cite[Remark~5]{Tei14} for details. 
    
    On the other hand, it would also be interesting to understand the subcritical integrability range
$p<n$. Although one cannot expect any gradient H\"older regularity in this regime, it is natural to ask whether solutions still satisfy a uniform H\"older
estimate under the assumption $f \in L^p$, possibly for a suitable range of $p<n$. We leave this question for future work and refer to the related works of Escauriaza~\cite{Esc93} and Teixeira~\cite{Tei14} in the uniformly elliptic case $\gamma=0$.

Before we state our main theorems, let us specify our setting more precisely. We consider viscosity solutions $u \in C(B_1)$ of  
\begin{equation*}
    |Du|^{\gamma}F(D^2u,x)=f(x) \quad \text{in $B_1$,}
\end{equation*}
where $F(\cdot, x)$ is uniformly elliptic with $F(0, x)\equiv 0$, and $f \in L^p(B_1) \cap C(B_1)$ for some $p>n$ or $f \in L^{n, 1}(B_1) \cap C(B_1)$; see Definition~\ref{def-Lorentz} for the definition of Lorentz spaces $L^{p,q}(\Omega)$. Since we allow the operator $F$ to depend on the spatial variable $x$, we impose a small oscillation condition in the spatial variable. Following the perturbative framework of Caffarelli~\cite{Caf89}, we measure the oscillation of $F$ near $x_0 \in B_1$ by
\begin{equation}\label{eq-def-beta}
    \beta_F(x, x_0)=\beta(x, x_0) \coloneqq \sup_{M \in \mathcal{S}^n \setminus \{0\}} \frac{|F(M, x)-F(M, x_0)|}{\|M\|},
\end{equation}
where $\mathcal{S}^n$ denotes the set of symmetric $n \times n$ matrices.

Throughout the paper, we work in an \textit{a priori} setting: we assume that $f$ and $\beta_F$ are H\"older continuous for some exponent in $(0,1)$. Let us clarify the role of this auxiliary regularity assumption used in the proof of Lemma~\ref{lem-approximation}. Indeed, this assumption is used only to justify the construction of the correctors and the use of classical test functions, as in the a priori framework of \cite[Section~8.2]{CC95}. The constants in the estimates below are independent of these auxiliary H\"older norms. Hence, the final estimates depend only on the relevant $L^p$- or $L^{n,1}$-quantities appearing in the hypotheses.\newline

Our first main theorem gives the optimal interior $C^{1,\alpha}$ estimate in the supercritical regime $p>n$. The admissible range of $\alpha$ is consistent with the heuristic obstruction suggested by the model example discussed above.

\begin{theorem}[$C^{1, \alpha}$ regularity]\label{thm-main-p>n}
    Suppose that $f \in L^p(B_1)\cap C(B_1)$ with $p>n$. Let $u \in C(B_1)$ be a viscosity solution of 
    \begin{equation*}
        |Du|^\gamma F(D^2u,x) =f \quad \text{ in }B_1.
    \end{equation*}
    For 
    \begin{equation*}
        \alpha \in(0,\alpha_0)\cap \left(0,\frac{1-n/p}{1+\gamma} \right],
    \end{equation*}
    there exists a constant $\eta>0$ depending only on $n$, $p$, $\lambda$, $\Lambda$, $\gamma$ and $\alpha$ such that if 
    \begin{equation*}
         \sup_{y \in B_{1/2}} \left(\fint_{B_r(y)} \beta^p(x, y) \, dx \right)^{1/p}  \leq \eta \quad \text{ for any $r \in (0, 1/2)$,}
    \end{equation*}
     then $u \in C_{\mathrm{loc}}^{1, \alpha}(B_1)$ with the uniform estimate
    \begin{equation*}
        \|u\|_{C^{1,\alpha}(B_{1/2})} \leq C(\|u\|_{L^\infty(B_1)}+\|f\|_{L^p(B_1)}^{\frac{1}{1+\gamma}}),
    \end{equation*}
    where $C>0$ is a constant depending only on $n$, $p$, $\lambda$, $\Lambda$, $\gamma$ and $\alpha$.
\end{theorem}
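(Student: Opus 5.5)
We follow the Imbert--Silvestre / Ara\'ujo--Ricarte--Teixeira strategy, adapted to $L^p$ data via scaling and the approximation lemma (Lemma~\ref{lem-approximation}).

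\textbf{Normalization.} By the scaling $v(x)=u(x)/K$ with $K=\|u\|_{L^\infty(B_1)}+\varepsilon^{-1}\|f\|_{L^p(B_1)}^{1/(1+\gamma)}$, the function $v$ solves an equation of the same type with right-hand side $f/K^{1+\gamma}$. Hence we may assume $\|u\|_{L^\infty}\le 1$ and $\|f\|_{L^p}\le\varepsilon$, where $\varepsilon$ is to be chosen. We also use translation to center the analysis at $y\in B_{1/2}$.

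\textbf{Compactness and the approximation step.} First, we need uniform H\"older estimates for solutions of the perturbed equations $|Du+q|^\gamma F(D^2u,x)=f$, uniformly in the vector $q\in\mathbb R^n$. These come from the sliding paraboloid/cusp arguments announced in the abstract: in the regime $|Du+q|\gtrsim1$ the equation is uniformly elliptic with an $L^p$ right-hand side, which is the ABP regime, and otherwise one uses the Imbert--Silvestre dichotomy. Using these, Lemma~\ref{lem-approximation} gives the following. For every $\delta>0$ there are $\varepsilon,\eta$ such that, if $\|f\|_{L^p(B_1)}\le\varepsilon$, the oscillation of $\beta$ is at most $\eta$ in $L^p$, and $\|u\|_\infty\le1$, then there is $h$ solving $F(D^2h,0)=0$ with $\|u-h\|_{L^\infty(B_{1/2})}\le\delta$. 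The point is that when $|q|$ is large the degeneracy disappears, and when $|q|$ is bounded the limit equation $|Dh+q|^\gamma F(D^2h)=0$ still forces $F(D^2h)=0$. By Theorem~\ref{thm-KS}, $h\in C^{1,\alpha_0}$ with universal bounds. Since $\alpha<\alpha_0$, we can choose $\rho$ with $C\rho^{1+\alpha_0}+\delta\le \rho^{1+\alpha}/2$, and obtain an affine $\ell$ with $\sup_{B_\rho}|u-\ell|\le\rho^{1+\alpha}$.

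\textbf{Iteration.} We rescale by $u_k(x)=(u(\rho^kx)-\ell_k(\rho^kx))/\rho^{k(1+\alpha)}$. This function solves
\[
|Du_k+\rho^{-k\alpha}D\ell_k|^\gamma F_k(D^2u_k,x)=f_k,\qquad f_k(x)=\rho^{k(1-\alpha(1+\gamma))}f(\rho^kx).
\]
Then
\[
\|f_k\|_{L^p(B_1)}=\rho^{k(1-\alpha(1+\gamma)-n/p)}\|f\|_{L^p(B_{\rho^k})}\le\varepsilon,
\]
precisely because $\alpha\le(1-n/p)/(1+\gamma)$. The $\beta$-hypothesis is scale-invariant, since it is the averaged $\beta$ over all $r<1/2$. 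The extra drift $q_k$ is harmless because the approximation lemma is uniform in $q$. Iterating yields affine functions $\ell_k$ with $\sup_{B_{\rho^k}}|u-\ell_k|\le\rho^{k(1+\alpha)}$ and coefficients that form Cauchy sequences, and this gives $C^{1,\alpha}$ at every $y\in B_{1/2}$ with the stated estimate.

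\textbf{Main obstacle.} The hard part is the uniform-in-$q$ approximation lemma with only $L^p$ control on $f$. We need equicontinuity that is independent of $|q|$, which is the sliding paraboloid/cusp H\"older estimate in each gradient regime. We also need to pass to the limit with $f\to0$ in $L^p$ rather than uniformly. For the latter we would use the correctors: solve $\mathcal M^\pm(D^2w)=\pm|f|$-type problems with $\|w\|_\infty\lesssim\|f\|_{L^n}$ by ABP, where $w$ is classical by the a priori H\"older assumption. We then absorb $f$ into test functions so that the standard stability of viscosity solutions applies.
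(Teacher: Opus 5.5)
\emph{Verdict: your outer argument is the paper's, but the way you propose to handle what you call the main obstacle fails.}

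Your outer argument matches the paper's. The normalization, the application of Lemma~\ref{lem-approximation} via $\|f\|_{L^{n,1}}\lesssim\|f\|_{L^p}$ and $\|\beta\|_{L^{n,1}}\lesssim\|\beta\|_{L^p}$, and the choice of $\rho$ from the $C^{1,\alpha_0}$ bound on $h$ (Lemma~\ref{Capprolem}) are all correct. So is the induction of Section~\ref{sec4} with $q_k=-\rho^{-k\alpha}b_k$ and $\|f_k\|_{L^p(B_1)}=\rho^{k(1-\alpha(1+\gamma)-n/p)}\|f\|_{L^p(B_{\rho^k})}$. If you simply cite Lemma~\ref{lem-approximation}, you are done.

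The gap is your plan for passing to the limit when $f_j\to0$ only in an integral norm. A corrector $w_j$ controlled only by ABP, $\|w_j\|_{L^\infty}\lesssim\|f_j\|_{L^n}$, is not enough for this degenerate equation. Suppose $P+w_j+c_j$ touches $u_j$ from above at $x_j$. The viscosity inequality is $|D(P+w_j)(x_j)-q_j|^{\gamma}F_j(D^2(P+w_j),x_j)\ge f_j(x_j)$. When $f_j(x_j)\le 0$, this contradicts $\mathfrak F(D^2P)<0$ only if the factor $|D(P+w_j)(x_j)-q_j|$ is bounded below. That means you need $|Dw_j(x_j)|\le a/2$, where $a\le|DP-q_j|$ near $x_0$. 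Uniform smallness of $w_j$ says nothing about $Dw_j$ at an uncontrolled touching point, and the degenerate factor may vanish there. So ``absorbing $f$ into the test function and using standard stability'' does not go through.

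What is needed, and what the paper does, is $\|D\psi_j\|_{L^\infty(B_r(x_0))}\to0$. The paper solves $\mathcal M^+_{\lambda^*,1}(D^2\psi_j)=-|f_j|-|\beta_j|-\varepsilon_j$. It then gets a $C^1$ bound in terms of $\|g_j\|_{L^{n,1}}$ from the Daskalopoulos--Kuusi--Mingione borderline estimate. For $p>n$ you could instead use Caffarelli's $W^{2,p}$ estimate for the convex operator $\mathcal M^+$ together with Morrey's embedding.

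Two further points:
\begin{enumerate}
\item Your corrector carries only $|f|$. It must also absorb the coefficient term $\|D^2P\|\,\beta_j(x)$, because $\beta_j\to0$ only in $L^{n,1}$ (or $L^p$) and cannot be frozen pointwise.
\item You suggest that $L^n$ control of $f$ suffices for this step. The remark after Lemma~\ref{lem-approximation} rules this out: an $L^n$ right-hand side can produce non-Lipschitz solutions, so no gradient control of the corrector is available in that setting.
\end{enumerate}
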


We next move our attention to the critical case $p=n$, where we cannot expect $C^{1, \alpha}$ regularity of viscosity solutions due to the earlier observation. Instead, we describe a borderline modulus of continuity with the aid of the Lorentz space $L^{n, 1}$, which is slightly stronger than $L^n$ but still contains $L^p$ for every $p>n$ on bounded domains.

\begin{theorem}[Log-Lipschitz regularity]\label{thm-main-p=n}
Suppose that $f \in L^{n, 1}(B_1) \cap C(B_1)$. Let $u \in C(B_1)$ be a viscosity solution of 
    \begin{equation*}
        |Du|^\gamma F(D^2u,x) =f \quad \text{ in }B_1.
    \end{equation*}
    Then there exists $\eta>0$ depending only on $n$, $\lambda$, $\Lambda$ and $\gamma$ such that if
    \begin{equation*}
         \sup_{y \in B_{1/2}} \frac{1}{r}\|\beta(\cdot, y)\|_{L^{n,1}(B_{r}(y))}  \leq \eta \quad \text{ for any $r \in (0, 1/2)$,}
    \end{equation*}
    then $u$ is locally log-Lipschitz, i.e., for any $x,y \in B_{1/2}$,
    \begin{equation*}
        |u(x)-u(y)| \leq C(\|u\|_{L^\infty(B_1)}+\|f\|_{L^{n,1}(B_1)}^{\frac{1}{1+\gamma}})\cdot \omega(|x-y|),
    \end{equation*}
    where $\omega(t) = -t\log t$ and $C>0$ is a constant depending only on $n$, $\lambda$, $\Lambda$ and $\gamma$.
\end{theorem}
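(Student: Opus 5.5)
Following the approach of Theorem~\ref{thm-main-p>n}, we plan a Caffarelli-type iteration in which the affine approximations are built from the $L^{n,1}$ structure. First we normalize by scaling $u \mapsto u/K$ with $K = \|u\|_{L^\infty(B_1)} + \delta^{-1}\|f\|_{L^{n,1}(B_1)}^{1/(1+\gamma)}$. This reduces to $\|u\|_{L^\infty(B_1)}\le 1$ and $\|f\|_{L^{n,1}(B_1)}\le \delta^{1+\gamma}$, where $\delta$ is small.

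The basic tool is the approximation lemma (Lemma~\ref{lem-approximation}), applied to the equation $|Du+q|^\gamma F(D^2u,x)=f$ with an arbitrary slope $q$. It says that if $\|f\|_{L^{n}}$ and the oscillation $\beta$ are small, then $u$ is close to a solution $h$ of $F(D^2 h,0)=0$. The $L^n$-smallness comes from the embedding $L^{n,1}\subset L^n$. The function $h$ is $C^{1,\alpha_0}$ by Theorem~\ref{thm-KS}, with estimates uniform in $q$. These estimates come from the uniform Hölder estimates obtained by sliding paraboloids or cusps, which hold in both gradient regimes ($|q|$ large and $|q|$ small).

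Fix $\rho$ small. From the approximation lemma we get an affine function $\ell$ with $|\ell(0)|+|D\ell|\le C$ and $\sup_{B_\rho}|u-\ell|\le \rho^{1+\alpha'}/2+C\epsilon$, where $\epsilon$ is the closeness parameter.

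Next we iterate at scales $r_k=\rho^k$. At step $k$ we rescale by setting $u_k(x)=(u-\ell_k)(r_kx)/r_k$. We use this Lipschitz rescaling rather than the $C^{1,\alpha}$ one, because the rescaled right-hand side then carries the weight $r_k^{\,1-n/n}=1$. Concretely, $f_k(x)=r_k f(r_kx)$ satisfies
\[
\|f_k\|_{L^{n,1}(B_1)}=\|f\|_{L^{n,1}(B_{r_k})},
\]
which is scale invariant, and the $\beta$-hypothesis transfers in the same way. With this normalization we cannot gain geometric decay. What we aim for instead is the bound
\[
\sup_{B_{r_k}}|u-\ell_k|\le C r_k\sum_{j\le k}\theta_j, \qquad |D\ell_{k+1}-D\ell_k|\le C\theta_k,
\]
where $\theta_k$ is roughly $\rho^{\alpha_0}\cdot(\text{previous error})+\|f\|_{L^{n,1}(B_{r_k})}^{1/(1+\gamma)}+\eta$. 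The precise form should be a recursion of the form $e_{k+1}\le \tfrac12 e_k + C\,\|f\|_{L^{n,1}(B_{r_k})}^{1/(1+\gamma)}$ together with $|D\ell_k|\le C\sum_{j\le k} e_j$. Bounded step sizes $e_k\le C$ then give $|D\ell_k|\le Ck$. This yields
\[
|u(x)-u(0)|\le C r_k(1+k)\simeq C|x|\log(1/|x|)
\]
for $|x|\sim r_k$, which is exactly the modulus $\omega$.

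The main obstacle is keeping the steps uniformly bounded while only an $L^{n,1}$ bound on $f$ is available. There are two difficulties. First, the slope $q=D\ell_k$ grows like $k$, so the approximation lemma must be uniform in $|q|$; this is where the uniform estimates in both gradient regimes are needed. Second, we need the errors to be summable in the right sense. Here I would use the dyadic characterization $\|f\|_{L^{n,1}}\simeq\sum_k r_k\, f^{*}$-type quantities, and check that $\sum_k \|f\|_{L^{n,1}(B_{r_k})}$-type contributions are controlled. If that fails, the fallback is to accept a bounded (rather than summable) step error, since the logarithmic modulus tolerates linear growth of $|D\ell_k|$ in $k$.

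Finally, a standard covering and translation argument gives the estimate for all $x,y\in B_{1/2}$. The case where the smallness hypotheses fail at some scale is absorbed into the constant.
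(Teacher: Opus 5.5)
Your plan is correct and is essentially the paper's proof. The shared steps are the Lipschitz rescaling $v=(u-\ell_k)(y+\rho^k x)/\rho^k$, the scale invariance of $\|f\|_{L^{n,1}}$ and of the $\beta$-hypothesis, and the approximation lemma (uniform in $q$) keeping $\sup_{B_{\rho^k}(y)}|u-\ell_k|\le\rho^k$ with $|D\ell_{k+1}-D\ell_k|\le C$, hence $|D\ell_k|\le Ck$ and the $r\log(1/r)$ modulus; your ``fallback'' of bounded steps is exactly what the paper does, and summability of $\|f\|_{L^{n,1}(B_{r_k})}$ is neither needed nor available in general. One correction: Lemma~\ref{lem-approximation} requires $L^{n,1}$-smallness of both $f$ and $\beta$, not $L^n$-smallness, because the corrector's $C^1$ bound fails for mere $L^n$ data; your scaling already supplies these $L^{n,1}$ bounds, so the argument is unaffected.
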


Let us briefly summarize the strategy of the proof. A key step is to obtain a uniform H\"older estimate for a class of equations (Theorem~\ref{holder}) that arise in the iteration argument. To be precise, after subtracting an affine function from $u$ and scaling, one is led to investigate equations of the form 
\begin{equation*}
    |Du-q|^{\gamma}F(D^2u, x)=f(x),
\end{equation*}
or, in view of Pucci extremal operators (see Definition~\ref{def-pucci}),
\begin{equation*}
     \left\{
             \begin{aligned}
                 \mathcal{M}^-(D^2u) &\leq |f|\\
                  \mathcal{M}^+(D^2u) &\geq -|f| 
             \end{aligned}
             \right.
             \quad \text{ in } \{|Du-q|>1\} \cap B_1.
\end{equation*}
Then Proposition~\ref{twoprop} shows that $u$ satisfies one of two alternatives, corresponding to the regimes of ``small gradients'' ($|Du|<A_0$) and ``large gradients'' ($|Du|>A_0+2$). Thus, it suffices to prove a uniform $L^{\varepsilon}$ estimate, which is independent of $q \in \mathbb{R}^n$, by combining the measure-type estimate and the doubling property in each regime. At this stage, we use the sliding paraboloid or cusp method to exploit the information on the size of gradients. Such sliding methods can be found in Savin~\cite{Sav07}, Mooney~\cite{Moo15, Moo19}, Imbert--Silvestre~\cite{IS16} and Byun--Kim--Oh~\cite{BKO252}. We also note that all results in Section~\ref{sec3} remain valid under the weaker assumption $f \in L^n$.

For $f\in L^{\infty}$, Imbert--Silvestre~\cite{IS13} obtained a uniform modulus of continuity result independent of $q \in \mathbb{R}^n$, by proving Lipschitz estimates (for large $|q|$'s) via the Ishii--Lions method. Nevertheless, the pointwise nature of Ishii--Lions method prevents its direct application of to merely integrable right-hand sides. Therefore, we replace this approach with a measure-theoretic one based on sliding paraboloids and cusps, which is robust under the assumption $f\in L^n$.

Once we develop the compactness result, we show the improvement of flatness or approximation lemma (Lemma~\ref{lem-approximation}) to guarantee that the solution $u$ is close to a solution of a frozen, homogeneous uniformly elliptic equation. It is noteworthy that, at this step, another difficulty arises from the lack of pointwise control on $f$. To be precise, when we try to verify the stability issue in the proof of approximation lemma, it is necessary to construct appropriate corrector functions $\psi$ that converge to $0$ in the Lipschitz sense. In order to ensure the convergence, we apply the borderline potential estimate for the Lorentz space $L^{n,1}$ developed by Daskalopoulos--Kuusi--Mingione~\cite{DKM14}, and so the current proof for the approximation lemma seems not valid under a weaker condition $f \in L^n$. Then the remaining iteration arguments are rather standard; the approximation lemma together with the smallness assumption on $\beta$ leads to the desired $C^{1, \alpha}$ estimate and log-Lipschitz estimate depending on $f$.\newline

The paper is organized as follows. In Section~\ref{sec-preliminaries}, we collect the basic definitions and preliminary results concerning viscosity solutions, uniformly elliptic operators, Pucci extremal operators and Lorentz spaces. In Section~\ref{sec3}, we establish uniform H\"older estimates for elliptic equations that hold only where $|Du-q|>1$. Section~\ref{sec-improvement} is devoted to the improvement of flatness via a compactness argument and perturbation from frozen homogeneous equations. In Section~\ref{sec4}, we prove the $C^{1,\alpha}$ regularity result in the case $p>n$. Finally, in Section~\ref{sec5}, we deal with the critical case by proving the log-Lipschitz estimate under the Lorentz assumption $f \in L^{n, 1}$.

\section{Preliminaries}\label{sec-preliminaries}
Let $G: \mathcal{S}^n \times \mathbb{R}^n \times \mathbb{R} \times \mathbb{R}^n \to \mathbb{R}$ be a continuous map. We are concerned with viscosity solutions $u$ of uniformly elliptic fully nonlinear equations
\begin{equation}\label{eq-general}
    G(D^2u, Du, u, x)=0,
\end{equation}
and, in particular, we may set
\begin{equation*}
    G(D^2u, Du, u, x) = |Du-q|^{\gamma}F(D^2u, x)-f(x).
\end{equation*}

\begin{definition}[Viscosity solutions]
	A continuous function $u$ is called a \textit{viscosity supersolution} of \eqref{eq-general} if for all $x_0\in \Omega$ and $\varphi\in C^{2}(\Omega)$ such that $u-\varphi$ has a local minimum at $x_0$, then 
	\begin{equation*}
		G(D^{2}\varphi(x_0), D\varphi(x_0), u(x_0), x_0)\leq 0.
	\end{equation*}
	A continuous function $u$ is called a \textit{viscosity subsolution} of \eqref{eq-general} if for all $x_0\in \Omega$ and $\varphi\in C^{2}(\Omega)$ such that $u-\varphi$ has a local maximum at $x_0$, there holds 
	\begin{equation*}
		G(D^{2}\varphi(x_0), D\varphi(x_0), u(x_0), x_0)\geq 0.
	\end{equation*}
	We say that $u\in C(\Omega)$ is a \textit{viscosity solution} of \eqref{eq-general} if $u$ is a viscosity supersolution and a subsolution simultaneously.
\end{definition}

\begin{definition}[Uniformly elliptic operator]
    We say that $F$ is \emph{uniformly elliptic} if there exist two positive constants $\lambda \leq \Lambda$ such that for any $M \in \mathcal{S}^n$ and $x \in \Omega$,
    \begin{equation*}
        \lambda \mathrm{tr}\,N \leq F(M+N, x)-F(M, x) \leq \Lambda \mathrm{tr}\,N  \quad \text{for any $N \geq 0$}.
    \end{equation*}
    Here, $N \geq 0$ means that $N$ is a nonnegative definite symmetric matrix.
\end{definition}

\begin{definition}[Pucci's extremal operators]\label{def-pucci}
    For given ellipticity constants $0<\lambda \leq \Lambda$, we define \emph{Pucci's extremal operators} by
    \begin{equation*}
        \mathcal{M}^+(M)=\mathcal{M}_{\lambda, \Lambda}^+(M)=\Lambda\sum_{e_i \geq 0}e_i+\lambda \sum_{e_i<0}e_i
    \end{equation*}
    and
     \begin{equation*}
        \mathcal{M}^-(M)=\mathcal{M}_{\lambda, \Lambda}^-(M)=\lambda\sum_{e_i \geq 0}e_i+\Lambda \sum_{e_i<0}e_i,
    \end{equation*}
    where $e_i=e_i(M)$ are the eigenvalues of $M \in \mathcal{S}^n$. 
\end{definition}

Let us recall the interior $C^{1, \alpha}$ regularity result for uniformly elliptic fully nonlinear equations, which follows from the Krylov--Safonov theory~\cite{KS79, KS80}.
\begin{theorem}[{\cite[Corollary~5.7]{CC95}}]\label{thm-KS}
    Let $F:\mathcal{S}^n \to \mathbb{R}$ be a $(\lambda, \Lambda)$-uniformly elliptic operator. If $u \in C(B_1)$ is a viscosity solution of 
    \begin{equation*}
        F(D^2u)=0 \quad \text{in $B_1$},
    \end{equation*}
    then there exists a constant $\alpha_0 \in (0,1)$, depending only on $n$, $\lambda$ and $\Lambda$, such that $u \in C^{1, \alpha_0}(\overline{B_{1/2}})$ and
    \begin{equation*}
        \|u\|_{C^{1, \alpha_0}(\overline{B_{1/2}})} \leq C\|u\|_{L^{\infty}(B_1)},
    \end{equation*}
    where $C>0$ is a constant depending only on $n$, $\lambda$ and $\Lambda$.
\end{theorem}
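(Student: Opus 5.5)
The statement immediately above is Theorem~\ref{thm-KS}, the interior $C^{1,\alpha_0}$ estimate for $F(D^2u)=0$ with $F$ independent of $x$. I will derive it from Krylov--Safonov theory in three steps: a uniform $C^\alpha$ estimate for the whole Pucci class, its application to incremental quotients of $u$, and an iteration that upgrades H\"older control of difference quotients to $C^{1,\alpha_0}$.

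\emph{Step 1: Harnack and H\"older for the Pucci class.} First recall the Krylov--Safonov Harnack inequality for functions in $S(\lambda,\Lambda,0)$, namely
\begin{equation*}
\mathcal{M}^-(D^2w)\le 0\le \mathcal{M}^+(D^2w) \quad \text{in the viscosity sense.}
\end{equation*}
This gives the interior estimate
\begin{equation*}
\|w\|_{C^{\bar\alpha}(\overline{B_{1/2}})}\le C\|w\|_{L^\infty(B_1)},
\end{equation*}
with $\bar\alpha,C$ depending only on $n,\lambda,\Lambda$. The inequality itself comes from the ABP estimate, a barrier construction, and the Calder\'on--Zygmund cube decomposition yielding the $L^\varepsilon$ (weak Harnack) estimate.

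\emph{Step 2: incremental quotients lie in the class.} Since $F$ has no $x$-dependence, the translate $u(\cdot+h)$ is again a solution of $F(D^2v)=0$. Uniform ellipticity then implies that the difference $u(\cdot+h)-u$ is a viscosity solution in $S(\lambda/n,\Lambda,0)$. In the viscosity setting this is the nontrivial fact that the difference of two solutions satisfies Pucci inequalities, which I would prove via sup/inf-convolution and Jensen's lemma. Hence the incremental quotients
\begin{equation*}
w_h=\frac{u(x+he)-u(x)}{|h|^{\beta}}
\end{equation*}
also belong to $S$, with $e$ a unit vector and $\beta$ the H\"older exponent obtained at the previous stage of the bootstrap.

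\emph{Step 3: bootstrap.} Start from $u\in C^{\bar\alpha}$ with $\|u\|_{C^{\bar\alpha}}\le C\|u\|_\infty$, from Step 1 applied to $u$ itself, using $F(0)=0$ without loss. Applying Step 1 to $w_h$ improves the exponent by $\bar\alpha$ at each stage, on slightly shrinking balls. The one care point is that when the exponent would hit $1$ one adjusts $\bar\alpha$ slightly so that no integer is hit. After finitely many steps $u$ is Lipschitz; one more step with $\beta=1$ gives $u\in C^{1,\alpha_0}$, with $\alpha_0$ chosen just below $\bar\alpha$.

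Throughout, the needed bound passes from ``$w_h\in C^{\bar\alpha}$ uniformly in $h$'' to ``$u\in C^{\beta+\bar\alpha}$'', which is the standard lemma on H\"older quotients (Caffarelli--Cabr\'e, Lemma 5.6). A covering argument then returns the estimate to $\overline{B_{1/2}}$ with constants depending only on $n,\lambda,\Lambda$.

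The main obstacle is Step 1, the Krylov--Safonov Harnack inequality in the viscosity framework, specifically the $L^\varepsilon$ estimate. Since the paper cites this theorem from \cite{CC95}, in practice I would invoke that reference directly rather than reprove it.
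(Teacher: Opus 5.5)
Your proposal is correct and follows the paper's route: the paper gives no proof of its own but cites \cite[Corollary~5.7]{CC95}, whose proof is exactly your chain. That chain is the H\"older estimate for the class $S(\lambda,\Lambda,0)$, the fact that the difference of two solutions of the same equation lies in $S(\lambda/n,\Lambda,0)$, and the incremental-quotient bootstrap. The phrase ``$F(0)=0$ without loss'' should be tightened: Caffarelli--Cabr\'e's estimate carries an extra $|F(0)|$, which drops here because $F(0)=0$ is the paper's standing assumption, or in general because the maximum principle applied to $u+\frac{F(0)}{2n\Lambda}|x|^2$ gives $|F(0)|\le 2n\Lambda\,\mathrm{osc}_{B_1}u$.
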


\begin{remark}
    If we additionally assume that $F$ is convex (or concave), then the Evans--Krylov theory (see \cite[Section~6]{CC95} for instance) says that a viscosity solution $u \in C(B_1)$ of $F(D^2u)=0$ in $B_1$ belongs to  $C^{1,1}_{\mathrm{loc}}(B_1)$ (in fact, $ C_{\mathrm{loc}}^{2, \tilde \alpha_0}(B_1)$ for some $\tilde \alpha_0 \in (0,1)$), so we can take $\alpha_0=1$ when $F$ is convex.
\end{remark}

Let us end this section with the definition of Lorentz spaces, which is closely related to several borderline regularity results.
\begin{definition}[Lorentz space]\label{def-Lorentz}
Let $1\leq p<\infty$, $0<q<\infty$, and let $\Omega\subset\mathbb{R}^n$ be an open set. For a measurable function $g:\Omega\to\mathbb{R}$, we say that $g\in L^{p,q}(\Omega)$ if
\begin{equation*}
    \|g\|_{L^{p,q}(\Omega)}
    \coloneqq \left(\int_0^\infty\left(t^p\left|\{x\in\Omega: |g(x)|>t\}\right|\right)^{q/p}\frac{dt}{t}\right)^{1/q}.
\end{equation*}
\end{definition}

We recall that
\begin{equation*}
    L^{p,p}(\Omega)=L^p(\Omega)
\end{equation*}
for every $1\leq p<\infty$, with equivalent norms. Moreover, if $|\Omega|<\infty$, then
\begin{equation*}
    L^p(\Omega)\subset L^{n,1}(\Omega)\subset L^n(\Omega) \quad \text{and} \quad \|f\|_{L^n(\Omega)} \lesssim \|f\|_{L^{n, 1}(\Omega)} \lesssim\|f\|_{L^p(\Omega)}
\end{equation*}
for every $p>n$. 

\section{Uniform H{\"o}lder estimates of elliptic equations that hold only where \texorpdfstring{$|Du-q|>1$}{|Du-q|>1}} \label{sec3}
In this section, we seek uniform H{\"o}lder estimates for viscosity solutions of 
\begin{equation*}
    |Du-q|^\gamma F(D^2u, x) =f \quad \text{in $B_1$},
\end{equation*}
where $q \in \mathbb{R}^n$ and $f\in L^n(B_1) \cap C(B_1)$. Our goal is to derive the uniform estimate, which is an $L^n$ analogue of \cite[Lemma~3]{IS13}, by employing the fact that this equation is uniformly elliptic when $|Du-q|>1$. 

\begin{theorem}[Modulus of continuity independent of $q$]\label{lem-equicontinuity}
    Let $u \in C(B_1)$ be a viscosity solution of
    \begin{equation*}
        |Du-q|^{\gamma}F(D^2u, x)=f \quad \text{in $B_1$},
    \end{equation*}
    where $q \in \mathbb{R}^n$ and $f \in L^n(B_1) \cap C(B_1)$.

    If $\|u\|_{L^{\infty}(B_1)} \leq 1$ and $\|f\|_{L^n(B_1)} \leq 1$, then $u$ satisfies a uniform $C^{\beta}$ estimate for some universal exponent $\beta \in (0,1)$, i.e.,
    \begin{equation*}
        [u]_{C^{\beta}(\overline{B_{1/2}})} \leq C,
    \end{equation*}
    where $C$ is independent of $q \in \mathbb{R}^n$.
\end{theorem}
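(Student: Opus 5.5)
Since $F(0,x)\equiv 0$ and $F(\cdot,x)$ is $(\lambda,\Lambda)$-elliptic, $u$ satisfies
\begin{equation*}
\mathcal{M}^-(D^2u)\le |f|, \qquad \mathcal{M}^+(D^2u)\ge -|f| \qquad \text{in } \{|Du-q|>1\}\cap B_1
\end{equation*}
in the viscosity sense. Indeed, at a test point where $|D\varphi-q|>1$ we may divide by $|D\varphi-q|^\gamma\ge 1$. No information is available where $|D\varphi-q|\le 1$. The strategy is a Krylov--Safonov type oscillation decay, obtained from a uniform weak Harnack ($L^\varepsilon$) estimate whose constants do not depend on $q$. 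Iterating the decay on dyadic balls $B_{4^{-k}}(x_0)$, $x_0\in B_{1/2}$, yields $[u]_{C^\beta}\le C$ with universal $\beta$. One caveat: rescaling $v(y)=u(x_0+ry)/\mathrm{osc}$ changes $q$ into $rq/\mathrm{osc}$ and the threshold $1$ into $r/\mathrm{osc}$. So the basic lemma must hold with an arbitrary threshold $\delta\le 1$ on $|Du-q|$ and arbitrary $q$, which is consistent since $\{|Du-q|>\delta\}$ grows as $\delta$ decreases. The $L^n$ norm of $f$ stays scale-invariantly small: $\|f_r\|_{L^n}= r^{1+\gamma}\cdots$, which is harmless, as in the Caffarelli setting.

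The core step is a measure (``basic'') estimate. Let $v\ge 0$ be a supersolution in the above restricted sense in $B_{2\sqrt n}$ with $\inf_{Q_3}v\le 1$ and $\|f\|_{L^n}\le \epsilon_0$. We want to show that $|\{v\le M\}\cap Q_1|\ge \mu>0$. We prove it by sliding a family of test functions from below and estimating the measure of contact points through an ABP-type area formula, following Savin and Imbert--Silvestre~\cite{IS16}. We split according to $q$, using the dichotomy of Proposition~\ref{twoprop}.

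\emph{Large gradients} ($|q|\ge A_0+2$ for a suitable universal $A_0$). Slide paraboloids $P_y(x)=-\frac{a}{2}|x-y|^2+c$ with vertices $y\in Q_1$ and small opening $a$. At a contact point $x$, $|DP_y(x)|\le a\cdot\operatorname{diam}\le A_0$, hence $|D P_y-q|\ge 2>1$ and the equation is available there. The usual Jacobian bound from $\mathcal{M}^-\le|f|$ then controls the measure of vertices by the measure of contact points together with $\int |f|^n$. This gives the estimate independently of $q$.

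\emph{Small gradients} ($|q|<A_0+2$). Now the bad set $\{|p-q|\le 1\}$ consists of bounded slopes. We slide cusps or steep cones, e.g. $-K|x-y|^{1/2}$, or paraboloids with large opening, whose gradients at any contact point are large: $|DP_y(x)|\ge A_0+3$ on the relevant region. The equation again holds at contact points. Under the map $x\mapsto y$, the Jacobian is bounded in terms of $D^2v$, which is controlled via $\mathcal{M}^-\le |f|$ together with the one-sided bound $D^2v\ge D^2P_y$. The cusp must be chosen so that the Jacobian estimate remains integrable near the vertex. I expect this case to be the main obstacle, namely obtaining a positive-measure set of vertices while forcing large slopes; Mooney's cusp technique~\cite{Moo15} is the template to follow.

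With the basic estimate in hand, the standard Calder\'on--Zygmund cube decomposition gives $|\{v>M^k\}\cap Q_1|\le (1-\mu)^k$, i.e. the $L^\varepsilon$ estimate. This step is scaling-compatible because every rescaled function lies in the same class with a new $q$ and a threshold $\delta\le1$. Applying the $L^\varepsilon$ estimate to $\sup u-u$ and $u-\inf u$ (the subsolution side is symmetric via $\mathcal{M}^+\ge-|f|$) gives $\mathrm{osc}_{B_{r/4}}u\le(1-\theta)\,\mathrm{osc}_{B_r}u+Cr^{1-\frac{n}{n}}\|f\|_{L^n}$ up to constants. Iterating this, and stopping once the oscillation is comparable to $r$ (then the scaled threshold exceeds $1$ but the gradient bound is trivially consistent with $C^\beta$), yields the uniform H\"older bound.
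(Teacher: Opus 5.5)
Your architecture matches the paper's: Pucci inequalities on $\{|Du-q|>1\}$, the dichotomy of Proposition~\ref{twoprop}, paraboloids where the inequality holds at bounded slopes, cusps where it holds at large slopes, then $L^\varepsilon$ and oscillation decay. However, the core basic estimate does not follow from what you describe, because a \emph{doubling step} is missing.

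You want $\inf_{Q_3}v\le1\Rightarrow|\{v\le M\}\cap Q_1|\ge\mu$. Sliding paraboloids with vertices in $Q_1$ yields contact points anywhere in the domain, not in $Q_1$. Worse, $v\ge0$ on $\partial B_{2\sqrt n}$ cannot even force interior contact. For $x_0$ near a corner of $Q_3$ and $y$ near the opposite corner of $Q_1$, one has $|x_0-y|\approx2\sqrt n$ but $\mathrm{dist}(y,\partial B_{2\sqrt n})\approx\frac32\sqrt n$, whatever the opening. In any case the opening must be large, with $A_0$ chosen afterwards, not small.

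In the uniformly elliptic theory, the transfer from $Q_3$ to $Q_1$ uses a barrier together with ABP or comparison. Here you cannot argue by comparison: the inequality holds only in a gradient range, and with $f\in L^n$ a single touching point gives no contradiction. The paper supplies this step in Lemmas~\ref{A01} and~\ref{A11} by sliding the barrier $\Phi(z)=\frac Mp(|z|^{-p}-2^{-p})$ itself, with vertices in $B_{1/8}$. Since $u>K$ on $B_{1/4}$, contact points lie in $B_3\setminus B_{1/4}$. There $|D\Phi|$ is bounded above and below, so the inequality is available in either regime after choosing $A_0$, resp.\ $M$, and $\mathcal{M}^-(D^2\Phi)-\Lambda|D\Phi|\ge1$. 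Hence $f(x)\ge1$ at every contact point, $|\det D_xy|\le C|f(x)|^n$, and $|B_{1/8}|\le C\|f\|_{L^n}^n$, a contradiction. Combined with the local measure estimates (low point and vertices in $B_{1/4}$, contact points in $B_1$), this gives Corollary~\ref{meas}. That corollary is what either the ink-spots or the Calder\'on--Zygmund step actually needs.

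Second, the cusp case, which you rightly flag, is left open, and the difficulty is not integrability of the Jacobian. For $\phi=-C|z|^{1/2}$ one gets $|D_xy|\le C(1+|f|)$, because $|D\phi|/|D^2\phi|\sim|x-y|$ and $|D^2\phi|^{-1}\lesssim|x-y|^{3/2}$ are bounded. The real danger is that the cusp touches at its vertex, where no gradient information is available. The paper argues contrapositively and takes the vertex set $V=\{u>K\}\cap B_{1/4}$, which has measure at least $|B_{1/4}|-\varepsilon_0|B_1|$. Every contact point lies in $\{u<K\}$, so $T\cap V=\varnothing$.

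Finally, fix your scaling. No $\gamma$ enters after dividing by $|Du-q|^\gamma\ge1$, and $\|r^2f(x_0+r\,\cdot)\|_{L^n(B_1)}=r\|f\|_{L^n(B_r)}$. So the error term in the oscillation decay is $Cr\|f\|_{L^n}$, not $Cr^{1-n/n}\|f\|_{L^n}$; without that factor $r$ the iteration gives no H\"older exponent.
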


    Here we observe that if $u \in C(B_1)$ satisfies 
    \begin{equation*}
        |Du-q|^{\gamma}F(D^2u, x)=f \quad \text{in $B_1$},
    \end{equation*}
    then $u$ satisfies
     \begin{equation*}
     \left\{
             \begin{aligned}
                 \mathcal{M}^-(D^2u) &\leq |f|\\
                  \mathcal{M}^+(D^2u)&\geq -|f| 
             \end{aligned}
             \right.
             \quad \text{ in } \{|Du-q|>1\} \cap B_1 
    \end{equation*}
    in the viscosity sense. Thus, we are going to prove the uniform estimates for a larger class of solutions, and Theorem~\ref{lem-equicontinuity} is a direct corollary of Theorem~\ref{holder} below.

\begin{theorem} \label{holder}
    Let $u \in C(B_1)$ satisfy the following inequalities:
    \begin{equation*}
              \mathcal{M}^-(D^2u)-\Lambda|Du| \leq f \leq \mathcal{M}^+(D^2u)+\Lambda|Du| \quad \text{ in } \{|Du-q|>1\} \cap B_1 
    \end{equation*}
    for some $q \in \mathbb{R}^n$ and $f \in L^n(B_1) \cap C(B_1)$.
    Then $u\in C^{\beta}_{\mathrm{loc}}(B_{1})$ for some $\beta>0$, depending only on $n$, $\lambda$ and $\Lambda$, with the uniform estimate 
    \begin{equation*}
    \|u\|_{C^{\beta}(B_{1/2})} \leq C(n,\lambda,\Lambda,\|u\|_{L^\infty(B_1)}, \|f\|_{L^n(B_1)}),
    \end{equation*}
    which is independent of $q \in \mathbb{R}^n$.
\end{theorem}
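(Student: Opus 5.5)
We follow the scheme of Imbert--Silvestre~\cite{IS16} and Mooney~\cite{Moo15}, with the Ishii--Lions argument replaced by a measure-theoretic one. The goal is a uniform oscillation decay $\operatorname{osc}_{B_{r/4}} u \le (1-\theta)\operatorname{osc}_{B_r} u + C r^{\beta'}$, with $\theta$ independent of $q$. Iterating this decay gives the $C^\beta$ estimate.

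By scaling $u_r(x)=u(rx)/r^{\delta}$ with a small $\delta$, the inequalities keep the same form. The set $\{|Du-q|>1\}$ becomes $\{|Du_r - q_r|>r^{1-\delta}\}\supset\{|Du_r-q_r|>1\}$ for $r<1$. The drift coefficient only gets smaller, and $\|f_r\|_{L^n(B_1)} = r^{1-\delta}\|f\|_{L^n(B_r)}$. So it suffices to prove one step at unit scale: if $0\le u\le 1$ in $B_1$ and $\|f\|_{L^n}\le\varepsilon_0$, then the oscillation of $u$ in $B_{1/4}$ drops by a universal factor. The threshold $1$ in $\{|Du-q|>1\}$ will be used only through Proposition~\ref{twoprop}. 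This step splits into two regimes.

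In the \emph{small gradient} regime, $|q|\ge A_0+2$ (or the relevant alternative of Proposition~\ref{twoprop}), slopes with $|p|<A_0$ satisfy $|p-q|>1$, so the equation is available wherever a paraboloid of small opening touches. We slide concave paraboloids of opening $a$ and vertex $y\in B_{1/4}$ from below. The touching points have $|D\varphi|\lesssim a$, which is admissible once $A_0\gg a$, and there the supersolution inequality $\mathcal M^-(D^2u)-\Lambda|Du|\le f$ holds. The standard ABP/area-formula argument, as in Savin~\cite{Sav07}, bounds the measure of the vertex set by the measure of the touching set plus $C\|f\|_{L^n}^n$, and the $L^n$-integrability of $f$ enters exactly here. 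This gives the measure estimate: if $\inf_{B_{1/4}}u\le 1$, then $|\{u\le M\}\cap B_1|\ge\mu$. Iterating with a Calder\'on--Zygmund decomposition yields the $L^\varepsilon$ estimate, i.e. a weak Harnack, for $u$ and for $1-u$, and hence the oscillation decay.

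In the \emph{large gradient} regime, $|q|$ is bounded, so at points where $|Du|>A_0+2$ we have $|Du-q|>1$. Here we slide cusps $\varphi(x)=-a|x-y|^{1/2}$ (or steep cones), as in Mooney~\cite{Moo19} and Byun--Kim--Oh~\cite{BKO252}. Their gradients near the touching points are large, so the touching occurs only where the equation holds. The Hessian of the cusp has one very negative eigenvalue, which makes $\mathcal M^-$ strongly negative and keeps the Jacobian of the contact map controlled. This produces an analogous measure estimate and doubling property, with constants independent of $q$, and therefore again an $L^\varepsilon$ estimate and oscillation decay. Proposition~\ref{twoprop} guarantees that every $q$ falls into at least one of the two cases, with a universal $A_0$.

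The main obstacle is the cusp regime. One must check that the contact set in the cusp method has measure comparable to the vertex set under the sole assumption $f\in L^n$. This requires a careful Jacobian computation for the map from touching points to vertices, since the cusp Hessian is singular. A further difficulty is that the doubling (barrier) step must be carried out with a barrier whose gradient stays in the good region. For this step we would adapt the radial barrier $|x|^{-N}$, tilted by an affine function so that its gradient avoids the ball $B_1(q)$. If that fails, we would instead combine the cusp measure estimate across scales through a covering argument that avoids a separate doubling step.
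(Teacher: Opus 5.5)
Your overall plan matches the paper's: the dichotomy of Proposition~\ref{twoprop}, sliding paraboloids in $\{|Du|\le A_0\}$, sliding cusps $-C|z|^{1/2}$ in $\{|Du|\ge A_0+2\}$, then an $L^\varepsilon$ estimate by a covering argument and the standard passage to H\"older continuity. The genuine gap is the \emph{doubling step}.

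In the small-gradient regime you go directly from ``$\inf_{B_{1/4}}u\le 1\Rightarrow |\{u\le M\}\cap B_1|\ge\mu$'' to a Calder\'on--Zygmund iteration. That iteration, like the ink-spots lemma, needs the positivity conclusion on the \emph{whole} ball $B$ whose density was tested: $|B\cap\{u>K^{m+1}\}|>(1-\varepsilon_0)|B|\Rightarrow B\subset\{u>K^m\}$. With $\tfrac14 B$ in place of $B$ there is no measure growth at all; $E=F=B_{1/2}$ satisfies the weakened hypothesis for small $\varepsilon_0$. So in \emph{both} regimes you need a separate lemma: if $u\ge 0$ in $B_3$ and $u>K$ in $B_{1/4}$, then $u>1$ in $B_1$. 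Your proposal leaves this open. The paraboloid and cusp arguments do not produce it, since their area-formula bound keeps a $|T|$ term you cannot control outside $B_{1/4}$. A barrier tilted by $q$ makes $M$ and $K$ grow with $|q|$: the affine part ruins the boundary comparison with $u\ge0$ and adds $\Lambda|q|$ to the drift. This is harmless only when $|q|\le A_0+1$, where it is also unnecessary, and fatal in the small-gradient regime. The fallback covering argument is not specified.

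The missing observation is that after Proposition~\ref{twoprop} the vector $q$ has disappeared. No tilting is needed, and the single radial barrier $\Phi(z)=\frac{M}{p}(|z|^{-p}-2^{-p})$ serves both regimes (Lemmas~\ref{A01} and~\ref{A11}). On $B_4\setminus B_{1/8}$,
\[
\mathcal{M}^-(D^2\Phi)-\Lambda|D\Phi|=M|z|^{-p-2}\bigl(\lambda(p+1)-\Lambda(n-1)-\Lambda|z|\bigr)\ge 1
\]
for $p,M$ large, and enlarging $M$ only helps. Meanwhile $|D\Phi|=M|z|^{-p-1}$ lies between two positive multiples of $M$. So you first fix $A_0\ge\sup|D\Phi|$ in the small-gradient case, then enlarge $M$ so that $\inf|D\Phi|\ge A_0+2$ in the large-gradient case. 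Slide $\Phi(\cdot-y)$, $y\in B_{1/8}$, from below. The contact points lie in $\{u<K\}\cap(B_3\setminus B_{1/4})$ with $K=\|\Phi\|_\infty$, the inequality holds there, and it forces $f\ge 1$. Since $D^2\Phi$ is uniformly invertible on the annulus, $|\det D_xy|\le C|f|^n$ with no $|T|$ term. The area formula then gives $|B_{1/8}|\le C\|f\|_{L^n}^n$, a contradiction for small $\|f\|_{L^n}$.

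Finally, the cusp Jacobian you flag as the main obstacle is routine. With vertices in $\{u>K\}\cap B_{1/4}$ and contacts in $\{u<K\}$, one has $x\ne y$. All eigenvalues of $D^2\phi(x-y)$ (one positive, $n-1$ negative) have size $\sim|x-y|^{-3/2}$, which yields $|D_xy|\le C(1+|f(x)|)$. Steep cones would not work: their Hessian vanishes radially, so $Du(x)$ no longer determines the vertex.
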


The proof follows an argument similar to that in \cite{BKO252}, adapted here for the case of $L^n$ data. We begin by observing that the scaled function of the solution $u$ satisfies one of two alternatives (see \cite[Proposition~3.1]{BKO252} for details).

\begin{proposition} \label{twoprop}
    Let  $u\in C(B_1)$ satisfy
    \begin{equation*}
        \mathcal{M}^-(D^2u)-\Lambda|Du| \leq f \ \text{ in } \{|Du-q|> 1\} \cap B_1,
    \end{equation*}
    for some $q\in \mathbb{R}^n$.
    Then for any $A_0 >0$, the scaled function $v(x) = u(rx)/M$ with $M\geq 1$ and $r \in (0, 1]$ satisfies either
    \begin{equation*}
        \mathcal{M}^-(D^2v)-\Lambda|Dv| \leq \tilde{f} \ \text{ in } \{|Dv| \leq A_0\} \cap B_{1/r}
    \end{equation*}
    or
    \begin{equation*}
        \mathcal{M}^-(D^2v)-\Lambda|Dv| \leq \tilde{f}  \ \text{ in } \{|Dv| \geq A_0+2\} \cap B_{1/r},
    \end{equation*}
    where $\tilde{f}(x)=r^2f(rx)/M$. Moreover, $\|\tilde{f}\|_{L^n(B_{1/r})} \leq \|f\|_{L^n(B_1)}$.
\end{proposition}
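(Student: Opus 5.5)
The proof is a scaling computation followed by a simple dichotomy on the vector $q/M$. The rescaling $v(x)=u(rx)/M$ converts the constraint set $\{|Du-q|>1\}$ into $\{|Dv-\tilde q|>r/M\}$, where $\tilde q = rq/M$. Since $r/M\le 1$, this set contains $\{|Dv-\tilde q|>1\}$. The alternatives then come from comparing $|\tilde q|$ with $A_0+1$.

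\textbf{Step 1: the rescaled inequality.} Let $\varphi$ touch $v$ from below at $x_0\in B_{1/r}$. Then $\psi(y)=M\varphi(y/r)$ touches $u$ from below at $rx_0$, with
\begin{equation*}
D\psi(rx_0)=\tfrac{M}{r}D\varphi(x_0), \qquad D^2\psi(rx_0)=\tfrac{M}{r^2}D^2\varphi(x_0).
\end{equation*}
Suppose $|D\psi(rx_0)-q|>1$, which is equivalent to $|D\varphi(x_0)-rq/M|>r/M$. The supersolution property of $u$ gives
\begin{equation*}
\tfrac{M}{r^2}\mathcal{M}^-(D^2\varphi(x_0))-\Lambda\tfrac{M}{r}|D\varphi(x_0)|\le f(rx_0).
\end{equation*}
Multiply by $r^2/M$ and use $r\le 1$, so that $\Lambda r|D\varphi|\le \Lambda|D\varphi|$. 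This yields
\begin{equation*}
\mathcal{M}^-(D^2\varphi(x_0))-\Lambda|D\varphi(x_0)|\le \tilde f(x_0).
\end{equation*}
If $f\ge 0$ fails, I keep the inequality exactly as written, since only the drift term is being bounded. Hence $v$ is a supersolution in $\{|Dv-\tilde q|>r/M\}\cap B_{1/r}$, and therefore in the smaller set $\{|Dv-\tilde q|>1\}\cap B_{1/r}$.

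\textbf{Step 2: the dichotomy.} If $|\tilde q|\ge A_0+1$, then $|Dv|\le A_0$ implies $|Dv-\tilde q|\ge 1$. If $|\tilde q|<A_0+1$, then $|Dv|\ge A_0+2$ implies $|Dv-\tilde q|>1$. In each case the relevant gradient region lies inside the set where the inequality holds.

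There is a boundary issue in the first case, where $|Dv-\tilde q|=1$ is possible. I would handle it in one of two ways. The first is to use the strict slack $r/M\le 1$, which covers the case $r/M<1$ directly. The second is to shift the threshold to $|\tilde q|>A_0+1$ and to $|\tilde q|\le A_0+1$, which gives $|Dv-\tilde q|>1$ in the first case. In the second case it still gives $|Dv-\tilde q|\ge |Dv|-|\tilde q|\ge 1$, with strict inequality unless $|Dv|=A_0+2$ and $|\tilde q|=A_0+1$ simultaneously. To avoid this tangency I would apply the same argument with $A_0$ replaced by $A_0-\epsilon$ for the split. The cleanest choice is to split at $|\tilde q|> A_0+\tfrac32$ versus $|\tilde q|\le A_0+\tfrac32$, which gives margins $\ge \tfrac32$ and $\ge\tfrac12$. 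Keeping the margin above 1 requires some care, and I expect this bookkeeping to be the only delicate point.

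\textbf{Step 3: the norm bound.} By change of variables,
\begin{equation*}
\int_{B_{1/r}}|\tilde f|^n\,dx=\frac{r^{2n}}{M^n}\,r^{-n}\int_{B_1}|f|^n\,dy=\frac{r^n}{M^n}\|f\|_{L^n(B_1)}^n\le \|f\|_{L^n(B_1)}^n,
\end{equation*}
using $r\le 1\le M$.
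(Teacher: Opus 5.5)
Your Steps 1 and 3 are correct. The overall strategy is the one the paper defers to in \cite[Proposition~3.1]{BKO252}: rescale to get the inequality on $\{|Dv-\tilde q|>r/M\}$ with $\tilde q=rq/M$, then split according to $|\tilde q|$ versus $A_0+1$. The gap is in Step 2, and none of your proposed repairs closes it.

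\begin{itemize}
\item Replacing $A_0$ by $A_0-\epsilon$ in the split achieves nothing, because the regions $\{|Dv|\le A_0\}$ and $\{|Dv|\ge A_0+2\}$ in the conclusion are fixed.
\item The split at $A_0+\tfrac32$ is wrong. In the second case it only yields $|Dv-\tilde q|\ge\tfrac12$. The inequality for $v$ is known only where $|Dv-\tilde q|>r/M$, and $r/M$ can be as large as $1$. For instance, take $|\tilde q|=A_0+\tfrac32$ and $Dv=(A_0+2)\tilde q/|\tilde q|$.
\item The split $|\tilde q|>A_0+1$ versus $|\tilde q|\le A_0+1$ gives strict margins in every case except $r=M=1$, $|q|=A_0+1$. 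There both margins $|\tilde q|-A_0$ and $A_0+2-|\tilde q|$ equal exactly $1$. Neither $\{|Dv|\le A_0\}$ nor $\{|Dv|\ge A_0+2\}$ lies inside $\{|Dv-q|>1\}$, whatever threshold you use.
\end{itemize}

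This residual case cannot be repaired, because the statement as written fails there. Let $q=(A_0+1)e_1$ and $u(x)=(A_0+1)x_1-\frac{1}{2\pi}\sin(2\pi x_1)$. Then $Du=(A_0+1-\cos(2\pi x_1))e_1$, so $|Du-q|\le 1$ everywhere. Since $u$ is smooth, every test function touching $u$ from below at $x_0$ has gradient $Du(x_0)$. Hence the hypothesis is vacuous and holds with $f\equiv-\Lambda(A_0+3)$. For $r=M=1$ we have $v=u$ and $\tilde f=f$. Moreover $D^2u=0$ both at $x=0$, where $|Du|=A_0$, and at $x=e_1/2$, where $|Du|=A_0+2$. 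At these points $\mathcal{M}^-(D^2u)-\Lambda|Du|$ equals $-\Lambda A_0$ and $-\Lambda(A_0+2)$ respectively, both larger than $f$. So both alternatives fail.

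The fix is to make one region open, e.g.\ $\{|Dv|>A_0+2\}$ instead of $\{|Dv|\ge A_0+2\}$. Then $|\tilde q|>A_0+1$ and $|Dv|\le A_0$ give $|Dv-\tilde q|>1\ge r/M$, and $|\tilde q|\le A_0+1$ and $|Dv|>A_0+2$ give the same. This weaker form is all that is used afterwards, since the contact points in Lemmas~\ref{A10} and~\ref{A11} satisfy the gradient lower bound strictly. The borderline case is not merely academic, though: the $L^\varepsilon$ estimate invokes the proposition with $r=M=1$ and arbitrary $q$.
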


In view of Proposition~\ref{twoprop}, we investigate the regularity in each of the two regimes: the small-gradient regime and the large-gradient regime.

Let us first prove a measure estimate for the equation, which is uniformly elliptic in the regime of small gradients, by applying the sliding paraboloid method.

\begin{lemma}\label{A00}
    There exist small $\delta_0>0$, $\varepsilon_0>0$ and large $K>1$, $A_0>1$ such that if $u \in C(B_1)$ satisfies
    \begin{equation*}
    \left\{
    \begin{aligned}
        u &\geq 0 &&\text{in } B_1,\\
        \mathcal{M}^-(D^2u)-\Lambda|Du| &\leq f &&\text{in } \{|Du| \leq A_0\}\cap B_1, \\
        \|f\|_{L^n(B_1)} &\leq \delta_0,\\
        |\{u>K\} \cap B_1| &\geq (1-\varepsilon_0)|B_1|,
    \end{aligned}
    \right.
    \end{equation*}
    then $u>1$ in $B_{1/4}$.
\end{lemma}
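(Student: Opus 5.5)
We argue by contradiction using the sliding paraboloid method of Savin. Suppose $u(x_0)\le 1$ for some $x_0\in B_{1/4}$. Fix a small opening $a>0$. For each vertex $y$ in a set $V\subset B_{1/2}$ (say $V=B_{1/8}$ or a small ball), we slide the concave paraboloid
\begin{equation*}
P_y(x)=c_y-\tfrac{a}{2}|x-y|^2
\end{equation*}
from below until it first touches $u$ in $\overline{B_1}$. Let $T$ be the set of contact points.

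Before running the touching argument, we fix the parameters. Since $u\ge0$ on $\partial B_1$, while $u(x_0)\le1$ and $|x_0-y|\le 3/8$, the contact point cannot lie on $\partial B_1$ once $a$ is chosen so that $1-\tfrac a2(3/8)^2$ exceeds the value the paraboloid could have near $\partial B_1$. Concretely, compare with $P_y(x_0)\ge u(x_0)-\tfrac a2|x_0-y|^2$, which gives
\begin{equation*}
c_y\le 1+\tfrac{a}{2}\cdot\tfrac{9}{64}.
\end{equation*}
Hence $P_y\le c_y-\tfrac a2(5/8)^2<0\le u$ on $\partial B_1$ if $a$ is a fixed constant (e.g. $a=8$), so contact points are interior.

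At a contact point $x$ we have $Du(x)=-a(x-y)$ in the touching sense, so $|Du(x)|\le 2a$. We choose $A_0\ge 2a$, which puts every contact point in the regime $\{|Du|\le A_0\}$ where the supersolution inequality holds. Moreover $u(x)\le P_y(x)\le c_y\le C_1$ for a universal $C_1$. Choosing $K>C_1$ then gives $T\subset\{u\le C_1\}\subset\{u\le K\}$, so
\begin{equation*}
|T|\le \varepsilon_0|B_1|.
\end{equation*}

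The key step is a lower bound for $|T|$ in terms of $|V|$, via the area formula applied to the map $x\mapsto y=x+Du(x)/a$ from $T$ onto $V$. At a contact point, $u$ is touched from below by a paraboloid of opening $a$, so $D^2u\ge -aI$. The viscosity inequality then gives
\begin{equation*}
\mathcal{M}^-(D^2u)\le f+\Lambda A_0 .
\end{equation*}
Together with $D^2u\ge -aI$, this bounds $D^2u\le C(a+|f(x)|)$. Consequently the Jacobian satisfies $|\det(I+D^2u/a)|\le C(1+|f(x)|/a)^n$. Since we are in an a priori setting with $f$ continuous, we perform this rigorously either by working with semiconcave envelopes or by following \cite{Sav07}/\cite{Moo15}: first establish the estimate for touching at points of second-order differentiability, then pass to the general case by a Lipschitz covering argument. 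This yields
\begin{equation*}
|V|\le \int_T C\bigl(1+|f|^n\bigr)\,dx\le C|T|+C\|f\|_{L^n}^n\le C\varepsilon_0|B_1|+C\delta_0^n .
\end{equation*}

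The main obstacle is the constant term $\Lambda|Du|\le\Lambda A_0$: it is a bounded positive term that, added to $f$, gives an $L^n$ contribution of size $\Lambda A_0|T|^{1/n}$, which is still controlled by $\varepsilon_0$. We need care in choosing constants in the right order: first $a$, then $A_0=2a$, then $K$, and finally $\varepsilon_0,\delta_0$ small enough that the right-hand side above is less than $|V|$. This yields the contradiction. The subtle technical point is justifying the area formula when $f$ is only $L^n$ (not $L^\infty$): we need the pointwise Hessian bound above at a.e. contact point together with the Lipschitz property of the map $y\mapsto x$ restricted to $T$. If that inverse map is not directly Lipschitz, we fall back on Savin's approach of estimating the measure of contact points from vertices in small balls.
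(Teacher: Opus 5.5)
Your proposal is correct and follows essentially the same route as the paper. Both arguments slide concave paraboloids from below (the paper uses $-10|z-y|^2$ with vertices in $B_{1/4}$, $K=7/2$, $A_0=40$), exclude boundary contact using $u\ge 0$ and $u(x_0)\le 1$, and place the contact points in $\{u<K\}\cap\{|Du|\le A_0\}$. Both then bound the Jacobian of $x\mapsto x+Du(x)/a$ by $C(1+|f|)^n$ using $D^2u\ge -aI$ and the equation, and conclude via the area formula, after reducing to smooth $u$ by inf-convolution. One small slip: the bound $c_y\le 1+\frac{a}{2}\cdot\frac{9}{64}$ follows from $P_y(x_0)\le u(x_0)\le 1$, since the paraboloid lies below $u$, not from the reversed inequality you wrote; the conclusion is unaffected.
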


\begin{proof}
By a standard approximation argument using inf-convolutions (see \cite[Section~3]{IS16} for instance), we may assume that $u\in C^2(B_1)\cap C(\overline{B_1})$.
Suppose, toward a contradiction, that the conditions of the Lemma hold but $u(x_0) \leq 1$ for some $x_0 \in B_{1/4}$.

Consider the vertex set $V = B_{1/4}$ and we slide a paraboloid $\phi(z-y) = -10|z-y|^2$ with vertex $y \in V$ from below, until it touches the graph of $u$ for the first time at a point $x \in \overline{B_1}$.
Let $T \subset \overline{B_1}$ be the set of these contact points.
For $y \in V$, we observe that
\begin{equation*}
    u(x_0) + 10|x_0-y|^2 < 1+ 10 \left(\frac{1}{2}\right)^2=\frac{7}{2}, \quad \text{since $x_0 \in B_{1/4}$}.
\end{equation*}
If $x \in \partial B_1$, then since $u \geq 0$ and $y \in B_{1/4}$, we find that
\begin{equation*}
    u(x) + 10|x-y|^2 > 10 \left(1-\frac{1}{4}\right)^2>\frac{7}{2}.
\end{equation*}
Therefore, we conclude that the minimum of $u-\phi(\cdot-y)$ cannot be attained on the boundary $\partial B_1$ and $T \subset \{ u<K\} \cap B_1$ for $K=7/2$.
Note that for any $x \in T$, we have
\begin{align}\label{Ducond1}
    Du(x) &= D\phi(x-y) = 20(y-x),\\ \label{D2ucond1}
    D^2u(x) &\geq D^2\phi(x-y) = -20I.
\end{align}
Choosing $A_0=40 \geq \sup_{ B_2}|D\phi|$, we ensure $T \subset \{|Du| \leq A_0\}$, and so the inequality 
\begin{equation*}
    \mathcal{M}^-(D^2u)-\Lambda|Du| \leq f
\end{equation*}
is satisfied in $T$. By differentiating \eqref{Ducond1} with respect to $x$, we obtain
\begin{equation*}
    D_xy = I + \frac{1}{20}D^2u(x) \geq 0,
\end{equation*}
where we used \eqref{D2ucond1}. Using the properties of $\mathcal{M}^-$, we have for $x \in T$,
\begin{equation*}
    \begin{aligned}
        \lambda \mathrm{tr}\,D_xy &= \mathcal{M}^-(D_xy) 
   \leq  \mathcal{M}^+(I) + \frac{1}{20}\mathcal{M}^-(D^2u) \\
   &\leq n\Lambda + \frac{1}{20}(f(x) + \Lambda|Du|) \\
   &\leq C(1+|f(x)|)
    \end{aligned}
\end{equation*}
for some $C=C(n, \Lambda)>0$. Moreover, by the inequality $\det M \leq \left( \frac{\mathrm{tr}\, M}{n}\right)^n$ for any nonnegative matrix $M \geq0$, we obtain
\begin{equation*}
	\det D_xy\leq \left(\frac{\mathrm{tr}\,{D_xy} }{n}\right)^n \leq C(1 +|f(x)|)^n.
\end{equation*}
Applying the area formula and recalling that $|T| \leq |\{ u<K\} \cap B_1| \leq \varepsilon_0|B_1| $,
\begin{equation*}
    |B_{1/4}| = |V| =\int_{T}|\det D_xy|\,dx \leq C(|T|+\|f\|_{L^n(T)}^n) \leq C(\varepsilon_0|B_1| +\delta_0^n),
\end{equation*}
which leads to a contradiction if we choose $\delta_0$ and $\varepsilon_0$ small enough.
\end{proof}

We next prove the doubling property for lower bounds of supersolutions by constructing an appropriate barrier function.
\begin{lemma}\label{A01}
    There exist small $\delta_0>0$ and large $K>1$, $A_0>1$ such that if  $u \in C(B_3)$ satisfies
    \begin{equation*}
    \left\{
    \begin{aligned}
        u &\geq 0 &&\text{in } B_3,\\
        \mathcal{M}^-(D^2u)-\Lambda|Du| &\leq f &&\text{in } \{|Du| \leq A_0\}\cap B_3, \\
        \|f\|_{L^n(B_3)} &\leq \delta_0, \\
        u&>K &&\text{in } B_{1/4},
    \end{aligned}
    \right.
    \end{equation*}
    then $u>1$ in $B_1$.
\end{lemma}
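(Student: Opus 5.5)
We prove the doubling property by contradiction, using a comparison with an explicit barrier. Since the equation is only known where $|Du|\le A_0$, the comparison must be arranged so that the contact points have small gradient. The natural way to guarantee this is again the sliding method of Lemma~\ref{A00}, now with a barrier of bounded slope in place of a paraboloid.

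\textbf{Reduction and barrier.} As in Lemma~\ref{A00}, by inf-convolution we may assume $u\in C^2(B_3)\cap C(\overline{B_3})$. Suppose $u(x_0)\le 1$ for some $x_0\in B_1$. We construct a radial function $\Phi$ on $\overline{B_3}$ with the following properties:
\begin{itemize}
\item $\Phi\le 0$ on $\partial B_3$;
\item $\Phi\le K$ on $B_{1/4}$;
\item $\Phi\ge 2$ on $B_1$;
\item $\mathcal{M}^-(D^2\Phi)-\Lambda|D\Phi|\ge 1$ in the annulus $B_3\setminus B_{1/8}$;
\item $|D\Phi|\le C_1$, with $C_1$ universal.
\end{itemize}
A concrete choice is $\Phi(x)=c_1(|x|^{-\sigma}-3^{-\sigma})$ with $\sigma$ large, which gives $\mathcal{M}^-(D^2\Phi)\ge c\sigma|x|^{-\sigma-2}$ and lets the drift term be absorbed. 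We then subtract a small multiple of $|x|^2$ to make the inequality strictly positive, and modify $\Phi$ inside $B_{1/8}$ so that it is smooth. Once $\sigma$ and $c_1$ are fixed, $K$ is chosen as $\sup_{B_{1/4}}\Phi$, and we take $A_0\ge C_1+40$.

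\textbf{Sliding step.} Consider $w=u-\Phi$. We have $w\ge 0$ on $\partial B_3$, $w>0$ on $B_{1/4}$, and $w(x_0)\le -1$. For vertices $y$ in a small ball $V=B_{\rho}$, slide the paraboloids $\Phi(z)-\frac{1}{2}|z-y|^2\cdot\epsilon$ (equivalently, concave paraboloids $-\epsilon|z-y|^2+c$) from below to $w$. Because $w$ is negative somewhere in $B_1$ but nonnegative on $\partial B_3\cup B_{1/4}$, all contact points $x$ lie in $B_3\setminus B_{1/4}$, provided $\epsilon$ is small relative to the gap $1$. At a contact point,
\[
Du=D\Phi+2\epsilon(y-x), \qquad D^2u\ge D^2\Phi-2\epsilon I.
\]
Hence $|Du|\le C_1+12\epsilon\le A_0$, so the inequality $\mathcal{M}^-(D^2u)-\Lambda|Du|\le f$ is available at every contact point.

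\textbf{Area formula estimate.} Differentiate the relation for $Du$ to get $D_xy=I+\frac{1}{2\epsilon}(D^2u-D^2\Phi)\ge 0$. Using the supersolution inequality together with $\mathcal{M}^-(D^2\Phi)-\Lambda|D\Phi|\ge 1$ and the sublinearity of the Pucci operators,
\[
\lambda\,\mathrm{tr}\,D_xy\le n\Lambda+\tfrac{1}{2\epsilon}\bigl(f-1+C\epsilon\bigr).
\]
This is bounded by $C(1+|f|)_+$-type quantities. The key point is that on the set where $f$ is small, the $-1$ makes the right-hand side negative once $\epsilon$ is small, which is impossible because $D_xy\ge 0$. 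Therefore the contact set $T$ is contained in $\{|f|\ge 1/2\}$. This is the same mechanism that drives the ABP estimate. The area formula then gives
\[
|V|\le\int_T\det D_xy\le C_\epsilon\|f\|_{L^n}^n\le C\delta_0^n,
\]
which is a contradiction for $\delta_0$ small.

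\textbf{Main obstacle.} The delicate step is the ordering of constants. The barrier's strict subsolution margin must dominate the perturbation coming from the sliding paraboloid, so $\epsilon$ has to be chosen small in terms of that margin. At the same time, $\rho$ must be small enough that contact points avoid $B_{1/4}$ and $\partial B_3$. Finally, $A_0$ must exceed the maximal slope of $\Phi$ plus the paraboloid slope; this ensures the contact points stay in the region where the equation is known.
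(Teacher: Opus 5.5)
Your argument is correct, but it is organized differently from the paper's proof.

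\textbf{The paper's route.} The paper slides the translates $\Phi(\cdot-y)$, $y\in B_{1/8}$, of the barrier itself under $u$. At a contact point $Du(x)=D\Phi(x-y)$ and $D^2u(x)\ge D^2\Phi(x-y)$, so $f\ge 1$ on $T$ at once. The Jacobian is then computed as $D_xy=I-(D^2\Phi)^{-1}(x-y)\,D^2u(x)$. This uses that $D^2\Phi$ is uniformly invertible on $B_4\setminus B_{1/8}$, and implicitly that $D\Phi$ is injective there, so that $y$ is a function of $x$. It also uses the two-sided bound $|D^2u|\le Cf$.

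\textbf{Your route.} You keep $\Phi$ fixed and run the paraboloid sliding of Lemma~\ref{A00} on $w=u-\Phi$ with paraboloids of opening $\epsilon$. Then $y=x+\frac{1}{2\epsilon}(Du-D\Phi)(x)$ is automatically a function of $x$, and $D_xy\ge 0$ comes for free. The bound $\det D_xy\le(\mathrm{tr}\,D_xy/n)^n$ therefore suffices. The barrier's margin $1$ beats the $O(\epsilon)$ errors, which gives $f\ge 1/2$ and $\mathrm{tr}\,D_xy\le C_\epsilon f$ on $T$.

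\textbf{What each buys.} Your version needs nothing about $\Phi$ beyond $C^2$ regularity on the annulus, the strict inequality $\mathcal{M}^-(D^2\Phi)-\Lambda|D\Phi|\ge 1$ and a gradient bound; no invertibility of $D^2\Phi$ is required. The cost is an auxiliary parameter $\epsilon$, fixed before $\delta_0$, and a slightly larger $A_0\ge C_1+C\epsilon$. The paper's route is more direct but depends on the specific structure of the radial barrier.

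\textbf{Two small corrections.} First, subtracting a multiple of $|x|^2$ from $\Phi$ would \emph{decrease} $\mathcal{M}^-(D^2\Phi)$. It is also unnecessary, since $c_1(|x|^{-\sigma}-3^{-\sigma})$ already satisfies $\mathcal{M}^-(D^2\Phi)-\Lambda|D\Phi|\ge 1$ on $B_3\setminus B_{1/8}$ for $\sigma,c_1$ large. Second, $\rho$ need not be small: contact points avoid $\overline{B_{1/4}}\cup\partial B_3$ as soon as $\epsilon(1+\rho)^2<1$.
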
 

\begin{proof}
By standard approximation again, we may assume that $u\in C^2(B_3)\cap C(\overline{B_3})$. We use the contradiction by assuming that $u(x_0) \leq 1$ for some $x_0 \in B_{1}$.
We define the following barrier function
\begin{equation*}
    \Phi(z) =  \frac{M}{p}(|z|^{-p}-2^{-p}) \quad \text { in } \mathbb{R}^n\setminus B_{1/8},
\end{equation*}
for some $p>1$ and $M>1$ to be determined later, and extend $\Phi (>0)$ smoothly inside $B_{1/8}$. For $z \in B_4\setminus B_{1/8}$, a direct calculation shows
\begin{equation} 
\begin{aligned}\label{barrierPDE}
    \mathcal{M}^-(D^2\Phi) -\Lambda|D\Phi| &= M |z|^{-p-2}(\lambda(p+1)- \Lambda(n-1) - \Lambda|z|) \\
    &\geq 1, 
\end{aligned}    
\end{equation}
provided that $p=p(n,\lambda,\Lambda)>1$ and $M=M(n,\lambda,\Lambda)>1$ are sufficiently large.

We choose $M$ further larger to ensure that $\Phi >2$ in $B_{3/2}$.
Consider the vertex set $V = B_{1/8}$ and we slide $\Phi(\cdot-y)$ with vertex $y \in V$ from below, until it touches the graph of $u$ for the first time at a point $x \in \overline{B_3}$. Let $T$ denote the set of corresponding contact points.

For $y \in V$, we observe that
\begin{equation*}
    u(x_0)-\Phi(x_0-y) < 1-2=-1, \quad \text{since $x_0 \in B_1$}.
\end{equation*}
If $x\in \partial B_3$, then since $u \geq 0$  and $\partial B_{3} \subset \{z:\Phi(z-y) \leq 0\}$, we find that
\begin{equation*}
    u(x) - \Phi(x-y) \geq 0.
\end{equation*}
Therefore, we conclude that the minimum of $u-\Phi(\cdot-y)$ cannot be attained on the boundary $\partial B_3$ and $T \subset \{ u<K\} \cap B_3$ for $K=\|\Phi\|_{L^\infty(\mathbb{R}^n)}$.
Recalling that $u>K$ in $B_{1/4}$, we have $T \subset \{ u<K\} \cap (B_3 \setminus B_{1/4})$.
Moreover, we find $1/8<|x-y| < 4$ for $x \in T$ and $y \in V$.
Note that for any $x \in T$, we get
\begin{align} \label{Ducond2}
    Du(x) &= D\Phi(x-y),\\ \label{D2ucond2}
    D^2u(x) &\geq D^2\Phi(x-y).
\end{align}
Choosing $A_0=\sup_{B_4\setminus B_{1/8}}|D\Phi|$, we obtain $T \subset \{|Du| \leq A_0\}$, and so the inequality
\begin{equation*}
    \mathcal{M}^-(D^2u)-\Lambda|Du| \leq f
\end{equation*}
is satisfied in $T$. Therefore, for $x \in T$, we have
\begin{equation} \label{uequ}
    1 \leq \mathcal{M}^-(D^2\Phi) -\Lambda|D\Phi| \leq \mathcal{M}^-(D^2u) -\Lambda|Du| \leq f(x).
\end{equation}
By combining \eqref{D2ucond2} and \eqref{uequ}, we find
\begin{equation*}
    |D^2u(x)| \leq C(|D^2\Phi(x-y)| + |Du(x)| +f(x)) \leq Cf(x),
\end{equation*}
where we used $|D^2\Phi(x-y)| \leq C=C(n, \lambda, \Lambda)$.
By differentiating \eqref{Ducond2} with respect to $x$ together with the fact that $D^2\Phi(\cdot)$ is (uniformly) invertible in $B_4 \setminus B_{1/8}$, we get
\begin{equation*}
	D_xy= I- (D^2 \Phi)^{-1}(x-y)  \cdot D^2u(x),
\end{equation*}
which implies that
\begin{equation*}
   |\det D_xy| \leq C|f(x)|^n.
\end{equation*}
Applying the area formula, we arrive at
\begin{equation*}
    |B_{1/8}| = |V| =\int_{T}|\det D_xy|\,dx \leq C\|f\|_{L^n(T)}^n \leq C\delta_0^n.
\end{equation*}
Choosing $\delta_0$ sufficiently small yields a contradiction.
\end{proof}

Combining Lemmas~\ref{A00} and \ref{A01}, we obtain the following corollary.
\begin{corollary} \label{A0}
    There exist small $\delta_0>0$, $\varepsilon_0>0$ and large $K>1$, $A_0>1$ such that if  $u \in C(B_3)$ satisfies
    \begin{equation*}
    \left\{
    \begin{aligned}
        u &\geq 0 &&\text{in } B_3,\\
        \mathcal{M}^-(D^2u)-\Lambda|Du| &\leq f &&\text{in } \{|Du| \leq A_0\}\cap B_3, \\
        \|f\|_{L^n(B_3)} &\leq \delta_0, \\
        |\{u>K\} \cap B_1| &\geq (1-\varepsilon_0)|B_1|,
    \end{aligned}
    \right.
    \end{equation*}
    then $u>1$ in $B_1$.
\end{corollary}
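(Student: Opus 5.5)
The plan is to chain Lemma~\ref{A00} and Lemma~\ref{A01}. The first gives positivity on $B_{1/4}$ from a density hypothesis on $B_1$. The second upgrades a large lower bound on $B_{1/4}$ to a lower bound on $B_1$. Since the two lemmas come with their own constants, call them $(\delta_1,\varepsilon_1,K_1,A_1)$ and $(\delta_2,K_2,A_2)$. The corollary's constants will be chosen compatibly, and the linearity of the class under multiplication by positive constants will bridge the gap between the threshold $1$ in Lemma~\ref{A00} and the threshold $K_2$ in Lemma~\ref{A01}.

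Concretely, take $u$ as in the statement and set $w=u/K_2$. Then $w\ge 0$ in $B_3$, and by positive homogeneity $\mathcal{M}^-(D^2w)-\Lambda|Dw|\le f/K_2$ wherever $|Dw|\le A_0/K_2$. Since $K_2>1$, it suffices to take $A_0\ge K_2\max(A_1,A_2)$; then $\{|Dw|\le A_1\}\subset\{|Du|\le A_0\}$, so $w$ satisfies the inequality of Lemma~\ref{A00} in $\{|Dw|\le A_1\}\cap B_1$. Moreover $\|f/K_2\|_{L^n(B_1)}\le \delta_0$. Choose $K=K_1K_2$ and $\varepsilon_0=\varepsilon_1$, so that $|\{w>K_1\}\cap B_1|=|\{u>K\}\cap B_1|\ge(1-\varepsilon_1)|B_1|$. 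Lemma~\ref{A00}, applied to the restriction of $w$ to $B_1$, then gives $w>1$ in $B_{1/4}$, that is, $u>K_2$ in $B_{1/4}$.

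Next apply Lemma~\ref{A01} to $u$ itself on $B_3$. We have $u\ge0$, the inequality holds on $\{|Du|\le A_2\}\subset\{|Du|\le A_0\}$, $\|f\|_{L^n(B_3)}\le\delta_0\le\delta_2$, and $u>K_2$ in $B_{1/4}$. The lemma yields $u>1$ in $B_1$. So the constants are $\delta_0=\min(\delta_1,\delta_2)$, $\varepsilon_0=\varepsilon_1$, $K=K_1K_2$, and $A_0=K_2\max(A_1,A_2)$. This choice of $\delta_0$ also gives $\|f/K_2\|\le\delta_1$ in the first step.

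The only delicate point is the bookkeeping of the gradient threshold $A_0$. Restricting the inequality to the smaller set $\{|Du|\le A\}$ with $A\le A_0$ is harmless, since the hypothesis only requires the inequality on that set. The rescaling by $K_2$, however, shrinks the admissible gradient region for $w$, and this is why $A_0$ must be enlarged by the factor $K_2$. No new estimates are needed beyond the two lemmas.
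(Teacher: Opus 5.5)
Your proof is correct and follows the paper's argument: rescale by $K_2$ and apply Lemma~\ref{A00} to get $u>K_2$ in $B_{1/4}$, then apply Lemma~\ref{A01} to $u$, with $K=K_1K_2$, $\delta_0=\min\{\delta_1,\delta_2\}$ and the gradient threshold enlarged by $K_2$. The paper takes $A_0=\max\{A_1K_2,A_2\}$, which your $K_2\max\{A_1,A_2\}$ dominates, so either choice works; you also make explicit the choice $\varepsilon_0=\varepsilon_1$, which the paper leaves implicit.
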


\begin{proof}
    Let $(\delta_1,K_1,A_1)$ and $(\delta_2,K_2,A_2)$  be the constants from Lemmas \ref{A00} and \ref{A01}, respectively.
    We set $K=K_1 K_2$, $A_0 = \max \{A_1K_2,A_2\}$ and $\delta_0=\min\{\delta_1, \delta_2\}$.
    Then the scaled function $v(x)=u(x)/K_2$ satisfies the assumptions of Lemma~\ref{A00}, implying $v>1$ in $B_{1/4}$.
    Thus, $u$ satisfies the assumptions of Lemma~\ref{A01}, and we conclude that $u>1$ in $B_1$.
\end{proof}

We next consider the equation which is uniformly elliptic when gradient is large and prove the interior measure estimate lemma similar to Lemma~\ref{A00} but using a cusp instead of a paraboloid, as in \cite{IS16}.

\begin{lemma} \label{A10}
    For any $A_1>1$, there exist small $\delta_0>0$, $\varepsilon_0>0$ and large $K>1$ such that if $u \in C(B_1)$ satisfies
    \begin{equation*}
    \left\{
    \begin{aligned}
        u &\geq 0 &&\text{in } B_1,\\
        \mathcal{M}^-(D^2u)-\Lambda|Du| &\leq f &&\text{in } \{|Du| \geq A_1\}\cap B_1, \\
        \|f\|_{L^n(B_1)} &\leq \delta_0,\\
        |\{u>K\} \cap B_1| &\geq (1-\varepsilon_0)|B_1|,            
    \end{aligned}
    \right.
    \end{equation*}
    then $u>1$ in $B_{1/4}$.
\end{lemma}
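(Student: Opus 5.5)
The plan is to mirror the proof of Lemma~\ref{A00}, but to slide a \emph{cusp} instead of a paraboloid, as in \cite{IS16}. The point is that a cusp has large gradient everywhere on a bounded set, so every contact point lies in the region $\{|Du|\geq A_1\}$ where the inequality is available. First, by inf-convolution, I may assume $u\in C^2(B_1)\cap C(\overline{B_1})$. I then argue by contradiction and suppose $u(x_0)\leq 1$ for some $x_0\in B_{1/4}$.

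\textbf{Choice of the cusp.} I take $\phi(z)=-c|z|^{1/2}$ with $c=c(A_1)$ large and vertex set $V=B_{1/4}$. For each $y\in V$ I slide $\phi(\cdot-y)$ from below until it first touches the graph of $u$ at some $x\in\overline{B_1}$, and let $T$ be the set of contact points.
\begin{itemize}
\item At a touching point, $u(x)+c|x-y|^{1/2}\leq u(x_0)+c|x_0-y|^{1/2}<1+c\,2^{-1/2}$.
\item On $\partial B_1$ we have $u\geq 0$ and $|x-y|\geq 3/4$, so $u(x)+c|x-y|^{1/2}\geq c(3/4)^{1/2}$.
\item Taking $c$ large so that $c\bigl((3/4)^{1/2}-2^{-1/2}\bigr)>1$ rules out boundary contact.
\item The same bound gives $T\subset\{u<K\}\cap B_1$ with $K=1+c\,2^{-1/2}$.
\end{itemize}

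\textbf{The two kinds of contact.} Contact may occur at the vertex itself, since the cusp points upward there. I split $V=V_0\cup V_1$, where $V_0$ consists of the vertices with contact at $x=y$. Then $V_0\subset T$, so $|V_0|\leq|T|\leq\varepsilon_0|B_1|$. For $y\in V_1$ the contact point satisfies $x\neq y$ and $|x-y|<2$. Hence
\begin{equation*}
Du(x)=D\phi(x-y), \qquad |Du(x)|=\tfrac c2|x-y|^{-1/2}\geq \tfrac c2\, 2^{-1/2}\geq A_1
\end{equation*}
once $c$ is large, and also $D^2u(x)\geq D^2\phi(x-y)$. Thus the supersolution inequality holds on $T\setminus\{x=y\}$. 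Since $D\phi$ is a diffeomorphism of $\mathbb{R}^n\setminus\{0\}$ onto the set $\{|p|>0\}$, I can write $y=x-(D\phi)^{-1}(Du(x))$, and
\begin{equation*}
D_xy=(D^2\phi)^{-1}(x-y)\,\bigl(D^2\phi(x-y)-\ldots\bigr)
\end{equation*}
up to sign. Consequently
\begin{equation*}
|\det D_xy|=|\det D^2\phi(x-y)|^{-1}\det\bigl(D^2u(x)-D^2\phi(x-y)\bigr).
\end{equation*}

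\textbf{The Jacobian estimate (main obstacle).} The difficulty is that $D^2\phi$ has mixed-sign eigenvalues, $c r^{-3/2}/4$ radially and $-c r^{-3/2}/2$ tangentially with $r=|x-y|$, and all of them blow up as $r\to0$, as does $|Du|\sim r^{-1/2}$. I handle this by a scale-invariant bound. Since $D^2u-D^2\phi\geq0$, subadditivity of the Pucci operators gives
\begin{equation*}
\lambda\,\mathrm{tr}(D^2u-D^2\phi)\leq \mathcal{M}^-(D^2u)-\mathcal{M}^-(D^2\phi)\leq f+\Lambda|Du|+Cr^{-3/2}\leq C\bigl(|f(x)|+r^{-3/2}\bigr),
\end{equation*}
where the last step uses $r^{-1/2}\leq 2r^{-3/2}$ for $r<2$. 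By AM--GM, $\det(D^2u-D^2\phi)\leq C(|f|+r^{-3/2})^n$. Combined with $|\det D^2\phi|\sim r^{-3n/2}$, this yields
\begin{equation*}
|\det D_xy|\leq C\bigl(1+r^{3/2}|f(x)|\bigr)^n\leq C(1+|f(x)|)^n .
\end{equation*}

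\textbf{Conclusion.} The area formula now gives
\begin{equation*}
|B_{1/4}|\leq |V_0|+\int_T|\det D_xy|\,dx\leq \varepsilon_0|B_1|+C\bigl(\varepsilon_0|B_1|+\delta_0^n\bigr).
\end{equation*}
This is a contradiction once $\varepsilon_0$ and $\delta_0$ are chosen small depending on $n,\lambda,\Lambda$ and $A_1$.
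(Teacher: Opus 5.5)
Your proof is correct and follows the paper's argument: slide the cusp $-c|z|^{1/2}$ from below, force the contact points into $\{u<K\}\cap\{|Du|\geq A_1\}$, bound the Jacobian by $C(1+|f|)^n$, and conclude with the area formula. The differences are cosmetic. You discard vertex contacts through the set $V_0\subset T$, whereas the paper takes $V=\{u>K\}\cap B_{1/4}$ so that $T\cap V=\varnothing$. You also bound $\det D_xy$ via $|\det D^2\phi|^{-1}\det(D^2u-D^2\phi)$ and AM--GM, as in Lemma~\ref{A00}, rather than through an operator-norm bound on $D_xy$.
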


\begin{proof}
Without loss of generality, we may assume that $u \in C^2(B_1) \cap C(\overline{B_1})$ via inf-convolution. Suppose, toward a contradiction, that $u(x_0) \leq 1$ for some $x_0 \in B_{1/4}$. 

Let $V = \{u>K\} \cap B_{1/4}$. 
For a constant $C>1$ to be determined later, we define the cusp $\phi(z) = -C|z|^{1/2}$.
For each $y \in V$, we slide the cusp $\phi(\cdot-y)$ from below, until it touches the graph of $u$ for the first time at $x \in \overline{B_1}$.
Let $T$ be the set of these contact points.

By choosing $C>1$ large enough, we ensure that $T \subset \{u <K\} \cap  B_1$ for some $K=K(C)>1$ by using a similar argument in Lemma~\ref{A00}. 
We choose $C=C(n,A_1)>1$ further larger so that $A_1 \leq \inf_{B_2} |D\phi|$.
Then we have $ T \subset \{|Du| \geq A_1\}$, and the inequality 
\begin{equation*}
    \mathcal{M}^-(D^2u)-\Lambda|Du| \leq f
\end{equation*}
holds in $T$. Note that for $x \in T$ and $y \in V$, we have $x\neq y$ since $T\cap V =\varnothing$.
Moreover, we have
\begin{align} \label{Ducond3}
    Du(x) &= D\phi(x-y) =\frac{C}{2}|y-x|^{-1/2}\frac{y-x}{|y-x|},\\ \label{D2ucond3}
    D^2u(x) &\geq D^2\phi(x-y)=\frac{C}{4}|y-x|^{-3/2}\left(3\frac{y-x}{|y-x|} \otimes \frac{y-x}{|y-x|}-2I\right).
\end{align}
Therefore, using \eqref{D2ucond3} and $\mathcal{M}^-(D^2u)-\Lambda|Du| \leq f$, we obtain
\begin{equation*}
    |D^2u(x)| \leq C(|f(x)|+|D^2\phi(x-y)| + |D\phi(x-y)|).
\end{equation*}
By differentiating \eqref{Ducond3} with respect to $x$, we get
\begin{equation*}
    D_xy = I -(D^2\phi)^{-1}(x-y) \cdot D^2u(x),
\end{equation*}
which implies
\begin{equation*}
    |D_xy| \leq C\left(1+\frac{|f(x)|+|D^2\phi(x-y)| + |D\phi(x-y)|}{|D^2\phi(x-y)|}\right) \leq C(1+|f(x)|).
\end{equation*}
Applying the area formula, we obtain
\begin{equation*}
    |B_{1/4}|-\varepsilon_0|B_1| \leq  |V| =\int_{T}|\det D_xy| \, dx \leq C(|T|+\|f\|_{L^n(T)}^n) \leq C(\varepsilon_0|B_1| +\delta_0^n),
\end{equation*}
which provides a contradiction for sufficiently small $\varepsilon_0$ and $\delta_0$.
\end{proof}

We again use the barrier function in Lemma~\ref{A01} to prove the doubling property.
\begin{lemma} \label{A11}
    For any $A_1>1$, there exist small $\delta_0>0$ and large $K>1$ such that if $u \in C(B_3)$  satisfies
    \begin{equation*}
    \left\{
    \begin{aligned}
        u &\geq 0 &&\text{in } B_3,\\
        \mathcal{M}^-(D^2u)-\Lambda|Du| &\leq f &&\text{in } \{|Du| \geq A_1\}\cap B_3, \\
        \|f\|_{L^n(B_3)} &\leq \delta_0, \\
        u&>K &&\text{in } B_{1/4},
    \end{aligned}
    \right.
    \end{equation*}
    then $u>1$ in $B_1$.
\end{lemma}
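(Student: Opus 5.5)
The plan is to repeat the proof of Lemma~\ref{A01} with the same radial barrier, now taking the multiplicative constant $M$ large in terms of $A_1$. The point is that on the annulus where contact can occur, the barrier's gradient is then bounded \emph{below} by $A_1$, rather than above by $A_0$. As before, by inf-convolution we may assume $u\in C^2(B_3)\cap C(\overline{B_3})$. We argue by contradiction, supposing $u(x_0)\le 1$ for some $x_0\in B_1$.

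\textbf{Step 1: the barrier.} Let
\begin{equation*}
\Phi(z)=\frac{M}{p}\bigl(|z|^{-p}-2^{-p}\bigr)\quad \text{in } \mathbb{R}^n\setminus B_{1/8},
\end{equation*}
extended smoothly and positively inside $B_{1/8}$. First fix $p=p(n,\lambda,\Lambda)$ so that $\lambda(p+1)-\Lambda(n-1)-4\Lambda>0$. By the computation \eqref{barrierPDE}, for every $M\ge M_1(n,\lambda,\Lambda)$ we have $\mathcal{M}^-(D^2\Phi)-\Lambda|D\Phi|\ge 1$ in $B_4\setminus B_{1/8}$, since the left-hand side is linear in $M$. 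Next, $|D\Phi(z)|=M|z|^{-p-1}\ge M4^{-p-1}$ on $B_4\setminus B_{1/8}$. We therefore also require $M\ge 4^{p+1}A_1$, which gives $\inf_{B_4\setminus B_{1/8}}|D\Phi|\ge A_1$, and we enlarge $M$ further so that $\Phi>2$ in $B_{3/2}$. Finally set $K=\|\Phi\|_{L^\infty}$; this depends on $A_1$.

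\textbf{Step 2: sliding.} Slide $\Phi(\cdot-y)$ from below for $y\in V=B_{1/8}$. Exactly as in Lemma~\ref{A01}, three facts hold. The minimum of $u-\Phi(\cdot-y)$ is negative, because of $x_0$. The minimum is not attained on $\partial B_3$, because $\Phi(\cdot-y)\le 0$ there. The minimum is not attained in $B_{1/4}$, because $u>K$ there. Hence every contact point satisfies $x\in T\subset (B_3\setminus B_{1/4})\cap\{u<K\}$. For $y\in B_{1/8}$ this gives $1/8<|x-y|<4$, and so
\begin{equation*}
|Du(x)|=|D\Phi(x-y)|\ge A_1 .
\end{equation*}
This is the only genuinely new point, and it is the step I expect needs care: one must check that contact points stay in the annulus where the lower gradient bound holds, which is exactly what the assumption $u>K$ in $B_{1/4}$ guarantees. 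Consequently the supersolution inequality is available on $T$, and we obtain
\begin{equation*}
1\le \mathcal{M}^-(D^2\Phi)-\Lambda|D\Phi|\le \mathcal{M}^-(D^2u)-\Lambda|Du|\le f(x)\quad\text{on } T.
\end{equation*}

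\textbf{Step 3: area formula.} The estimate concludes verbatim as in Lemma~\ref{A01}. First,
\begin{equation*}
|D^2u(x)|\le C\bigl(|D^2\Phi|+|D\Phi|+|f|\bigr)\le C|f(x)|,
\end{equation*}
where we used $f\ge 1$ on $T$ and that $D\Phi$, $D^2\Phi$ are bounded on the annulus by constants depending on $A_1$. Second, $D^2\Phi$ has eigenvalues $M(p+1)|z|^{-p-2}$ and $-M|z|^{-p-2}$, so it is uniformly invertible on the annulus. Differentiating $Du(x)=D\Phi(x-y)$ in $x$ then gives
\begin{equation*}
D_xy=I-(D^2\Phi)^{-1}(x-y)\,D^2u(x),\qquad |\det D_xy|\le C|f(x)|^n .
\end{equation*}
The area formula now yields $|B_{1/8}|\le C\|f\|_{L^n(T)}^n\le C\delta_0^n$. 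This is a contradiction once $\delta_0=\delta_0(n,\lambda,\Lambda,A_1)$ is chosen small.
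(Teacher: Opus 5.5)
Your proposal is correct and follows the paper's proof essentially verbatim. You use the same barrier $\Phi$ from Lemma~\ref{A01}, with $M=M(n,\lambda,\Lambda,A_1)$ enlarged so that $\inf_{B_4\setminus B_{1/8}}|D\Phi|\ge A_1$; the same localization of contact points in $\{u<K\}\cap(B_3\setminus B_{1/4})$, which is what makes the large-gradient inequality available on $T$; and the same area-formula contradiction. Your explicit choices (first $p$ with $\lambda(p+1)-\Lambda(n-1)-4\Lambda>0$, then $M\ge 4^{p+1}A_1$) simply fill in details the paper leaves implicit.
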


\begin{proof}
The proof follows the same argument as Lemma~\ref{A01}, utilizing the barrier function:
\begin{equation*}
    \Phi(z) =  \frac{M}{p}(|z|^{-p}-2^{-p}) \quad \text { in } \mathbb{R}^n\setminus B_{1/8}.
\end{equation*}
Let  $V = B_{1/8}$ be the vertex set and $T$ be the set of corresponding contact points obtained by sliding $\Phi(z-y)$ with $y \in V$.
We choose $p=p(n, \lambda, \Lambda)>1$ and $M=M(n,\lambda,\Lambda, A_1)>1$ sufficiently large so that \eqref{barrierPDE} holds, $T \subset \{ u<K\} \cap (B_3 \setminus B_{1/4})$ with $K=\|\Phi\|_{L^\infty(\mathbb{R}^n)}$, and
\begin{equation*}
    A_1 \leq  \inf_{B_4\setminus B_{1/8}}|D\Phi|,
\end{equation*}
so that the inequality holds in $T$. The remainder of the proof proceeds exactly as in Lemma~\ref{A01}.
\end{proof}

Combining Lemmas~\ref{A10} and \ref{A11}, we also obtain the following corollary.

\begin{corollary} \label{A1}
    For any $A_1>1$, there exist small $\delta_0>0$, $\varepsilon_0>0$ and large $K>1$, such that if  $u \in C(B_3)$ satisfies
    \begin{equation*}
    \left\{
    \begin{aligned}
        u &\geq 0 &&\text{in } B_3,\\
        \mathcal{M}^-(D^2u)-\Lambda|Du| &\leq f &&\text{in } \{|Du| \geq A_1\}\cap B_3, \\
        \|f\|_{L^n(B_3)} &\leq \delta_0, \\
        |\{u>K\} \cap B_1| &\geq (1-\varepsilon_0)|B_1|,
    \end{aligned}
    \right.
    \end{equation*}
    then $u>1$ in $B_1$.
\end{corollary}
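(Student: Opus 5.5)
We obtain Corollary~\ref{A1} by chaining Lemma~\ref{A10} and Lemma~\ref{A11} with a rescaling, exactly as Corollary~\ref{A0} was deduced from Lemmas~\ref{A00} and \ref{A01}. The one new point is that the gradient condition $\{|Du|\ge A_1\}$ is \emph{not} monotone in the right direction under division by a constant. If $v=u/K_2$, then $\{|Dv|\ge A\}=\{|Du|\ge K_2A\}$. So to apply Lemma~\ref{A10} to $v$ we must choose the threshold there in terms of $A_1$ and $K_2$.

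First, apply Lemma~\ref{A11} with the given $A_1$. This produces constants $(\delta_2,K_2)$ such that $u\ge0$, $\mathcal M^-(D^2u)-\Lambda|Du|\le f$ in $\{|Du|\ge A_1\}\cap B_3$, $\|f\|_{L^n(B_3)}\le\delta_2$, and $u>K_2$ in $B_{1/4}$ together imply $u>1$ in $B_1$.

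Next, apply Lemma~\ref{A10} with threshold $\tilde A_1:=A_1/K_2$, which may be assumed $>1$ by enlarging the barrier constant $M$ in Lemma~\ref{A11}; alternatively one checks that Lemma~\ref{A10} works for any positive threshold, since the cusp constant only needs $\inf_{B_2}|D\phi|\ge\tilde A_1$. This gives $(\delta_1,\varepsilon_1,K_1)$.

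Now set $v=u/K_2$ and $\tilde f=f/K_2$, so that
\[
\mathcal M^-(D^2v)-\Lambda|Dv|\le \tilde f\quad\text{in } \{|Dv|\ge A_1/K_2\}\cap B_1=\{|Du|\ge A_1\}\cap B_1 .
\]
Also $\|\tilde f\|_{L^n(B_1)}\le\delta_0/K_2\le\delta_1$ and $\{v>K_1\}=\{u>K_1K_2\}$. Hence the choice $K=K_1K_2$, $\varepsilon_0=\varepsilon_1$, $\delta_0=\min\{\delta_1,\delta_2\}$ makes $v$ satisfy the hypotheses of Lemma~\ref{A10} on $B_1$ (restricting from $B_3$). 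It follows that $v>1$ in $B_{1/4}$, that is, $u>K_2$ in $B_{1/4}$. Lemma~\ref{A11} applied to $u$ itself on $B_3$ then yields $u>1$ in $B_1$.

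The only delicate step is the threshold bookkeeping: Lemma~\ref{A10} must be invoked with the rescaled threshold $A_1/K_2$ rather than $A_1$. This is legitimate because $K_2$ depends only on $n,\lambda,\Lambda,A_1$ and is fixed before Lemma~\ref{A10} is applied, so all resulting constants depend only on $n,\lambda,\Lambda,A_1$.
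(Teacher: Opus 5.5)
Your proof is correct and follows the paper's route: rescale $v=u/K_2$, apply Lemma~\ref{A10} to get $u>K_2$ in $B_{1/4}$, then apply Lemma~\ref{A11}, with $K=K_1K_2$ and $\delta_0=\min\{\delta_1,\delta_2\}$.

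However, the step you flag as delicate is misdiagnosed. In the large-gradient regime the rescaling works in your favour: $\{|Dv|\ge A_1\}=\{|Du|\ge K_2A_1\}\subset\{|Du|\ge A_1\}$ because $K_2>1$. So Lemma~\ref{A10} applies to $v$ with the original threshold $A_1$. This is why the paper, unlike in Corollary~\ref{A0}, makes no threshold adjustment.

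Your threshold $A_1/K_2$ is also admissible, but not for the first reason you give. Enlarging $M$ in Lemma~\ref{A11} only increases $K_2=\|\Phi\|_{L^\infty}$, and with that barrier one always has $K_2>A_1$, so $A_1/K_2<1$. Only your fallback is valid: Lemma~\ref{A10} holds for every positive threshold. This is in fact trivial, since a smaller threshold is a stronger hypothesis.
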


\begin{proof}
    Let $(\delta_1,K_1)$, $(\delta_2,K_2)$ be the constants from Lemmas \ref{A10} and \ref{A11} respectively.
    We choose $\delta_0 = \min\{\delta_1,\delta_2\}$ and $K=K_1 K_2$, then the corollary follows from  the same argument above.
\end{proof}

We summarize the results obtained so far by combining the preceding lemmas above.
\begin{corollary} \label{meas}
    There exist $\delta_0>0$, $\varepsilon_0>0$, $K>1$ and $A_0>1$ such that if $u \in C(B_3)$ satisfies either
    \begin{equation*}
    \left\{
    \begin{aligned}
            u &\geq 0 &&\text{in } B_3,\\
            \mathcal{M}^-(D^2u)-\Lambda|Du| &\leq f &&\text{in } \{|Du| \leq A_0\}\cap B_3,\\
            \|f\|_{L^n(B_3)} &\leq \delta_0, \\
            |\{u>K\} \cap B_1| &\geq (1-\varepsilon_0)|B_1|,
    \end{aligned}
    \right.
    \end{equation*}
    or
    \begin{equation*}
    \left\{
        \begin{aligned}
            u &\geq 0 &&\text{in } B_3,\\
            \mathcal{M}^-(D^2u)-\Lambda|Du| &\leq f &&\text{in } \{|Du| \geq A_0+2\}\cap B_3,\\
            \|f\|_{L^n(B_3)} &\leq \delta_0, \\
            |\{u>K\} \cap B_1| &\geq (1-\varepsilon_0)|B_1|,
        \end{aligned}
        \right.
    \end{equation*}   
    we have $u>1$ in $B_1$.
\end{corollary}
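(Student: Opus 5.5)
The statement simply combines Corollary~\ref{A0} and Corollary~\ref{A1}. The only point to settle is a consistent choice of constants, since the threshold $A_0$ in the first alternative has to match the threshold $A_0+2$ in the second.

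First, apply Corollary~\ref{A0}. It produces constants $(\delta_1,\varepsilon_1,K_1,A_0)$, with $A_0>1$ fixed once and for all. Then apply Corollary~\ref{A1} with the specific choice $A_1 \coloneqq A_0+2>1$. This is legitimate because Corollary~\ref{A1} holds for \emph{any} $A_1>1$. It produces constants $(\delta_2,\varepsilon_2,K_2)$ depending on $A_0$, and hence only on $n,\lambda,\Lambda$.

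Next, set
\begin{equation*}
\delta_0=\min\{\delta_1,\delta_2\},\qquad \varepsilon_0=\min\{\varepsilon_1,\varepsilon_2\},\qquad K=\max\{K_1,K_2\},
\end{equation*}
and keep $A_0$ as above.

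The key observation is that all the hypotheses are monotone in these constants. The condition $\|f\|_{L^n(B_3)}\le\delta_0$ implies the corresponding bound with $\delta_i$. Since $\{u>K\}\subset\{u>K_i\}$, the measure condition $|\{u>K\}\cap B_1|\ge(1-\varepsilon_0)|B_1|$ implies $|\{u>K_i\}\cap B_1|\ge(1-\varepsilon_i)|B_1|$.

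So in the first alternative the hypotheses of Corollary~\ref{A0} hold, and in the second alternative those of Corollary~\ref{A1} hold with $A_1=A_0+2$. In either case the conclusion $u>1$ in $B_1$ follows.

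There is no real obstacle here. The only care needed is the order of the choices: $A_0$ must be fixed before $A_1=A_0+2$ is fed into Corollary~\ref{A1}. Otherwise the constants of the second regime would depend on a still-undetermined $A_0$, and the argument would be circular.
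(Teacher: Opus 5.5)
Your proof is correct and matches the paper's argument: fix $(\delta_1,\varepsilon_1,K_1,A_0)$ from Corollary~\ref{A0}, apply Corollary~\ref{A1} with $A_1=A_0+2$, and take $\delta_0=\min\{\delta_1,\delta_2\}$, $\varepsilon_0=\min\{\varepsilon_1,\varepsilon_2\}$, $K=\max\{K_1,K_2\}$. Your explicit check that the hypotheses are monotone in these constants is the justification the paper leaves implicit.
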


\begin{proof}
    We first apply Corollary~\ref{A0} and let $(\delta_1, \varepsilon_1, K_1, A_0)$ be the constants from Corollary~\ref{A0}. Then we apply Corollary~\ref{A1} for $A_1=A_0+2$ and let $(\delta_2, \varepsilon_2, K_2)$ be the constants from Corollary~\ref{A1}.Then the desired conclusion holds with the choices $\delta_0=\min\{\delta_1, \delta_2\}$, $\varepsilon_0=\min\{\varepsilon_1, \varepsilon_2\}$ and $K=\max\{K_1, K_2\}$.
\end{proof}

We are now ready to prove $L^\varepsilon$ estimate by utilizing the growing ink-spots lemma.
\begin{theorem}
    There exist $\delta_0>0$, $\varepsilon>0$ and $C>1$ such that for any $q \in \mathbb{R}^n$, if $u \in C(B_3)$ satisfies
    \begin{equation*}
    \left\{
        \begin{aligned}
            u &\geq 0 &&\text{in } B_3,\\
            \mathcal{M}^-(D^2u)-\Lambda|Du| &\leq f &&\text{in } \{|Du-q| > 1\}\cap B_3,\\
            \|f\|_{L^n(B_3)} &\leq \delta_0, \\
            \inf_{B_1} u &\leq 1,
        \end{aligned}
        \right.
    \end{equation*}
    then
    \begin{equation*}
        |\{u>t\} \cap B_1| \leq Ct^{-\varepsilon} \quad \text{for any } t>0.
    \end{equation*}
\end{theorem}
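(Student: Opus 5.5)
The plan is to derive the $L^\varepsilon$ estimate from Corollary~\ref{meas} via a growing ink-spots (Krylov--Safonov covering) lemma. The target is the geometric decay
\begin{equation*}
    |\{u>K^k\}\cap B_1| \leq (1-c\varepsilon_0)^k|B_1| \quad \text{for all } k\geq 1,
\end{equation*}
where $K$, $\varepsilon_0$ are the constants of Corollary~\ref{meas} and $c=c(n)\in(0,1)$ comes from the covering lemma. Once this holds, $|\{u>t\}\cap B_1|\leq Ct^{-\varepsilon}$ follows by choosing $k$ with $K^k\leq t<K^{k+1}$ and setting $\varepsilon=-\log(1-c\varepsilon_0)/\log K$. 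For $t\leq K$ the bound is trivial after enlarging $C$.

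\emph{Step 1: the scaled measure estimate.} Fix $k\geq1$ and let $E=\{u>K^{k}\}\cap B_1$ and $F=\{u>K^{k-1}\}\cap B_1$. The ink-spots lemma needs the following. If a ball $B=B_\rho(x_0)$ with $B_{3\rho}(x_0)\subset B_3$ (or its intersection with $B_1$) satisfies $|E\cap B|\geq(1-\varepsilon_0)|B|$, then $B_{\rho}(x_0)\subset F$. In some versions one needs a slightly larger ball $\tilde B\subset F$; the lemma requires the enlarged ball $B_{3\rho}$, or a dilate of it, to be covered by $F$, and it suffices to state the conclusion on $B_\rho$ itself. To prove this, set
\begin{equation*}
    v(x)=u(x_0+\rho x)/K^{k-1}.
\end{equation*}
Then $v\geq0$ in $B_3$ and $|\{v>K\}\cap B_1|\geq(1-\varepsilon_0)|B_1|$.

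By Proposition~\ref{twoprop}, in its translated form with $r=\rho\leq1$ and $M=K^{k-1}\geq1$, $v$ satisfies the supersolution inequality either in $\{|Dv|\leq A_0\}$ or in $\{|Dv|\geq A_0+2\}$. The right-hand side is $\tilde f(x)=\rho^2f(x_0+\rho x)/K^{k-1}$, and $\|\tilde f\|_{L^n(B_3)}\leq\rho\|f\|_{L^n}\leq\delta_0$. The scaling also turns $\Lambda|Du|$ into $\rho\Lambda|Dv|\leq\Lambda|Dv|$, so the form of the inequality is preserved. Importantly, Corollary~\ref{meas} covers both alternatives with the same constants, independent of $q$ and of the new shifted vector $\rho q/K^{k-1}$. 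Hence $v>1$ in $B_1$, that is, $u>K^{k-1}$ in $B_\rho(x_0)$.

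\emph{Step 2: the covering.} Apply the growing ink-spots lemma in the form of Imbert--Silvestre~\cite{IS16} or the Calderón--Zygmund cube decomposition of~\cite{CC95}. Suppose $E\subset F\subset B_1$, $|E|\leq(1-\varepsilon_0)|B_1|$, and every ball $B\subset B_1$ with $|E\cap B|>(1-\varepsilon_0)|B|$ satisfies $\hat B\cap B_1\subset F$ for a fixed dilate $\hat B$. Then $|E|\leq(1-c\varepsilon_0)|F|$. The global hypothesis $|E|\leq(1-\varepsilon_0)|B_1|$ follows from Step 1 applied with $B=B_1$: otherwise $u>K^{k-1}\geq1$ on all of $B_1$, contradicting $\inf_{B_1}u\leq1$. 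Iterating in $k$ gives the decay estimate.

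\emph{Main obstacle.} The delicate point is the geometric bookkeeping in Step 1. Balls centered near $\partial B_1$ must still have enough room, with $B_{3\rho}(x_0)\subset B_3$, and the conclusion has to reach the dilated ball required by the covering lemma. To handle this, I will use cubes or balls of radius at most $1/2$ centered in $B_1$, so that the enlarged ball stays inside $B_3$. If the lemma requires the conclusion on a $3\times$ dilate, I will rescale Corollary~\ref{meas} by an extra fixed factor, using Lemma~\ref{A01}-type doubling iterated a bounded number of times. This may enlarge $K$, but only by a universal amount.
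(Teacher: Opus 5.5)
Your proposal is correct and follows the paper's proof essentially verbatim. You rescale $v(x)=u(x_0+\rho x)/K^{k-1}$, invoke Proposition~\ref{twoprop} and Corollary~\ref{meas} to get $B\subset\{u>K^{k-1}\}$ whenever $|B\cap\{u>K^k\}|>(1-\varepsilon_0)|B|$, and conclude with the growing ink-spots lemma of \cite{IS16}. Your hedging about dilates and radii at most $1/2$ is unnecessary: \cite[Lemma~2.1]{IS16} only requires $B\subset F$, which is exactly what your Step 1 gives, and every ball $B_\rho(x_0)\subset B_1$ automatically satisfies $B_{3\rho}(x_0)\subset B_3$ because $|x_0|+3\rho\leq 1+2\rho\leq 3$.
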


\begin{proof}
    We let $(\delta_0, \varepsilon_0, K, A_0)$ be the constants from Corollary~\ref{meas}. Then it suffices to show that there exists a constant $c=c(n) \in (0,1)$ such that
\begin{equation*}
    |\{ u > K^m\} \cap B_1| \leq (1-c\varepsilon_0 )^m |B_1| \quad \text{for any $m \in \mathbb{N}$}.
\end{equation*}
    In fact, in view of Proposition~\ref{twoprop} (with $r=1$ and $M=1$), $u$ satisfies either
    \begin{equation*}
        \mathcal{M}^-(D^2u)-\Lambda|Du| \leq f \ \text{ in } \{|Du| \leq A_0\} \cap B_{3}
    \end{equation*}
    or
    \begin{equation*}
        \mathcal{M}^-(D^2u)-\Lambda|Du| \leq f  \ \text{ in } \{|Du| \geq A_0+2\} \cap B_{3}.
    \end{equation*}
   Therefore, it follows from Corollary~\ref{meas} that 
   \begin{equation*}
       |\{ u > K\} \cap B_1| \leq (1-\varepsilon_0 ) |B_1|.
   \end{equation*}
    Setting $E_m =\{u>K^m\} \cap B_1$, it is immediate that $|E_{m+1}| \leq |E_1| \leq (1-\varepsilon_0)|B_1|$ for any $m \in \mathbb{N}$. We now suppose that $B=B_r(x_0) \subset B_1$ satisfies $|B \cap E_{m+1}| > (1-\varepsilon_0)|B|$. Then for the scaled function
    \begin{equation*}
        v(x)=\frac{u(rx+x_0)}{K^{m}},
    \end{equation*}
    we observe that
    \begin{equation*}
        |\{v>K\} \cap B_1|=\frac{|B \cap E_{m+1}|}{|B|}|B_1| >(1-\varepsilon_0)|B_1|.
    \end{equation*}
    In particular, $v$ satisfies all the assumptions of Corollary~\ref{meas} (with the aid of Proposition~\ref{twoprop} again), and so we obtain that
    \begin{equation*}
        v >1 \quad \text{in $B_1$, \quad or equivalently, \quad $B \subset E_m$}.
    \end{equation*}
    We now apply the growing ink-spots lemma (see \cite[Lemma~2.1]{IS16} for details) to conclude that
    \begin{equation*}
        |E_{m+1}| \leq (1-c\varepsilon_0)|E_m|,
    \end{equation*}
    which finishes the proof.
\end{proof}




Finally, we can prove the uniform H\"older regularity, Theorem~\ref{holder} (and so Theorem~\ref{lem-equicontinuity}), by repeating the standard arguments provided in \cite[Section~6]{IS16}, so we omit the proof.

\section{Improvement of flatness}\label{sec-improvement}
In this section, we prove the approximation lemma by using the compactness method from the uniform H\"older estimates (Theorem~\ref{lem-equicontinuity}). For simplicity, we write $\beta(x) = \beta(x, 0)$ when $x_0=0$.
\begin{lemma}[Approximation lemma]\label{lem-approximation}
    Suppose $f \in L^{n,1}(B_1) \cap C(B_1)$. 
    Let $q \in \mathbb{R}^n$ be an arbitrary vector and let $u \in C(B_1)$ be a viscosity solution of 
    \begin{equation*}
        |Du-q|^{\gamma}F(D^2u, x)=f(x)  \quad \text{in $B_1$},
    \end{equation*}
    satisfying $\|u\|_{L^{\infty}(B_1)} \leq 1$. Given any $\delta>0$, there exists $\eta>0$ depending only on $n$, $\lambda$, $\Lambda$, $\gamma$, $\delta$ such that if
    \begin{equation*}
        \|\beta\|_{L^{n,1}(B_1)} \leq \eta \quad \text{and} \quad \|f\|_{L^{n,1}(B_1)}  \leq \eta,
    \end{equation*}
    then there exists a function $h \in C(B_{1/2})$, solution to a constant coefficient, homogeneous, $(\lambda, \Lambda)$-uniformly elliptic equation
    \begin{equation}\label{eq-compactness}
        \mathfrak{F}(D^2h)=0 \quad \text{in $B_{1/2}$}
    \end{equation}
    such that
    \begin{equation*}
        \|u-h\|_{L^{\infty}(B_{1/2})} \leq \delta.
    \end{equation*}
\end{lemma}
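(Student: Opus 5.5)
We argue by contradiction and compactness. If the lemma fails, there exist $\delta_0>0$ and sequences $q_k\in\mathbb{R}^n$, operators $F_k$ (each $(\lambda,\Lambda)$-uniformly elliptic with $F_k(0,x)\equiv0$), data $f_k$ and viscosity solutions $u_k$ of $|Du_k-q_k|^\gamma F_k(D^2u_k,x)=f_k$ in $B_1$. These satisfy $\|u_k\|_{L^\infty(B_1)}\le1$ and $\|\beta_{F_k}\|_{L^{n,1}(B_1)}+\|f_k\|_{L^{n,1}(B_1)}\le 1/k$, while $\|u_k-h\|_{L^\infty(B_{1/2})}>\delta_0$ for every solution $h$ of a constant coefficient homogeneous $(\lambda,\Lambda)$-elliptic equation in $B_{1/2}$. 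Since $\|f_k\|_{L^n}\lesssim\|f_k\|_{L^{n,1}}\le1$, Theorem~\ref{lem-equicontinuity} (applied on balls of radius $3/4$, after scaling) gives a uniform $C^\beta$ bound on $\overline{B_{3/4}}$ that is independent of $q_k$. By Arzel\`a--Ascoli, after passing to a subsequence, $u_k\to u_\infty$ uniformly on $\overline{B_{3/4}}$. The frozen operators $F_k(\cdot,0)$ are uniformly Lipschitz on $\mathcal{S}^n$, so along a further subsequence they converge locally uniformly to some $(\lambda,\Lambda)$-elliptic $\mathfrak{F}$ with $\mathfrak{F}(0)=0$. We then show that $u_\infty$ solves $\mathfrak{F}(D^2u_\infty)=0$ in $B_{1/2}$. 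Taking $h=u_\infty$ gives the contradiction.

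The core step is stability, and we treat the cases $q_k$ bounded (so $q_k\to q_\infty$ along a subsequence) and $|q_k|\to\infty$ in the same way. Fix a quadratic polynomial $P$ touching $u_\infty$ strictly from below at $x_0\in B_{1/2}$, and suppose toward a contradiction that $\mathfrak{F}(D^2P)>0$. Because $f_k$ and $\beta_{F_k}$ are only small in $L^{n,1}$, we cannot evaluate the equation pointwise. Instead we build correctors. Using the a priori H\"older assumption on the data, let $\psi_k$ solve the Dirichlet problem
\begin{equation*}
\mathcal{M}^-(D^2\psi_k)=-|f_k|\,/\,c-\Lambda\,\beta_{F_k}(x)\|D^2P\| \quad\text{in } B_r(x_0), \qquad \psi_k=0 \text{ on }\partial B_r(x_0),
\end{equation*}
with a suitable normalization constant $c$ and with classical (for instance $C^{2,\alpha}$, via \cite[Section~8.2]{CC95}) regularity. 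The borderline potential estimate of Daskalopoulos--Kuusi--Mingione~\cite{DKM14} for $L^{n,1}$ right-hand sides gives
\begin{equation*}
\|\psi_k\|_{L^\infty}+\|D\psi_k\|_{L^\infty}\le C\big(\|f_k\|_{L^{n,1}}+\|\beta_{F_k}\|_{L^{n,1}}\big)\to0.
\end{equation*}
Next consider $P+\psi_k+\epsilon_k$ with $\epsilon_k$ chosen so that it touches $u_k$ from below at some $x_k\to x_0$. The supersolution property then yields
\begin{equation*}
|DP(x_k)+D\psi_k(x_k)-q_k|^\gamma F_k\big(D^2P+D^2\psi_k,x_k\big)\le f_k(x_k).
\end{equation*}

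To conclude, we split into cases according to the size of the gradient factor. If $|DP(x_0)-q_\infty|>0$, or if $|q_k|\to\infty$, then the factor $|DP+D\psi_k-q_k|^\gamma$ stays bounded below by a positive constant for large $k$, because $D\psi_k\to0$ uniformly. Dividing by it and using ellipticity, $F_k(M,x)\ge F_k(M,0)-\beta_{F_k}(x)\|M\|$, together with the corrector equation, produces $F_k(D^2P,0)\le o(1)$, which contradicts $\mathfrak{F}(D^2P)>0$. The degenerate case $DP(x_0)=q_\infty$ we handle with the Imbert--Silvestre trick: if $D^2P$ has a positive eigenvalue, we perturb by a function $\pm\kappa|(x-x_0)\cdot e|$ to restrict to the subspace where a test function with nonvanishing gradient is available. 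Otherwise $D^2P\le0$ forces $\mathfrak{F}(D^2P)\le0$. The subsolution side is symmetric. We expect the main obstacle to be the construction and uniform Lipschitz smallness of the correctors, which is precisely where the $L^{n,1}$ hypothesis and \cite{DKM14} enter, rather than mere $L^n$. A secondary difficulty is arranging that the touching points stay in a region where the gradient factor is controlled uniformly in $k$.
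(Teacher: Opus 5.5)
Your overall strategy is the paper's. You use contradiction and compactness via the $q$-independent H\"older estimate. You build a corrector from the borderline $L^{n,1}$ estimate of \cite{DKM14} to absorb $f_k$ and $\beta_{F_k}$, which are small only in norm, not pointwise. You handle test functions with $DP(x_0)=q_\infty$ by the Imbert--Silvestre cutting argument. The paper instead proves that $u_\infty$ solves $|Du_\infty-q_\infty|^\gamma\mathfrak{F}(D^2u_\infty)=0$ and cites \cite[Lemma~6]{IS13}. Inlining that argument and treating $|q_k|\to\infty$ directly rather than by rescaling are both fine.

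However, the corrector has the wrong sign as written, and it is the step that carries the proof. You test the supersolution property, so you need a lower bound on $F_k(D^2P+D^2\psi_k,x_k)$. Ellipticity gives
\[
F_k(D^2P+D^2\psi_k,x_k)\ge F_k(D^2P,0)-\beta_{F_k}(x_k)\|D^2P\|+\mathcal{M}^-(D^2\psi_k)(x_k).
\]
With $\mathcal{M}^-(D^2\psi_k)=-|f_k|/c-\Lambda\beta_{F_k}\|D^2P\|$, the corrector adds the bad terms instead of cancelling them. The supersolution inequality gives $F_k(D^2P+D^2\psi_k,x_k)\le|f_k(x_k)|/c$, where $c$ is a lower bound for the gradient factor. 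Combining the two, you only obtain $F_k(D^2P,0)\le C\bigl(|f_k(x_k)|+\beta_{F_k}(x_k)\bigr)$. This is not $o(1)$, because these quantities are not pointwise small, which is exactly what the corrector was meant to fix.

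The fix is to flip the sign and solve $\mathcal{M}^-(D^2\psi_k)=|f_k|/c+\beta_{F_k}\|D^2P\|$. Two details matter here:
\begin{itemize}
\item The coefficient in front of $\beta_{F_k}\|D^2P\|$ must be at least $1$; your $\Lambda$ may be smaller than $1$.
\item $c$ must be fixed before constructing $\psi_k$, for example $c=(a/2)^\gamma$ in the bounded case as in the paper, or $c=1$ when $|q_k|\to\infty$.
\end{itemize}
Then $\psi_k\le 0$, and the ABP plus \cite{DKM14} bound is unchanged. That bound is interior, so apply it on a smaller concentric ball; this is harmless since $x_k\to x_0$. At $x_k$ you get $F_k(D^2P,0)+|f_k(x_k)|/c\le|f_k(x_k)|/c$, so $F_k(D^2P,0)\le0$ and hence $\mathfrak{F}(D^2P)\le0$.

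This is the mirror image of the paper's subsolution-side corrector $\mathcal{M}^+_{\lambda^*,1}(D^2\psi_j)=-|f_j|-|\beta_j|-\varepsilon_j$. Its right-hand side is negative because there one needs an upper bound on $F_j$. Your use of $\mathcal{M}^-_{\lambda,\Lambda}$ directly is legitimate and avoids the auxiliary constant $\lambda^*$.
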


\begin{proof}
    We proceed by contradiction. Suppose that there exists $\delta_0>0$ such that the statement fails.
    Then there exist a sequence of continuous functions $\{u_j\}$, a sequence of $(\lambda, \Lambda)$-uniformly elliptic operators $F_j : \mathcal{S}^n \times B_1 \to \mathbb{R}$, a sequence of vectors $\{q_j\} \subset \mathbb{R}^n$, and a sequence of functions $\{f_j\} \subset L^{n,1}(B_1)$ such that
    \begin{enumerate}[(i)]
        \item $\|u_j\|_{L^{\infty}(B_1)} \leq 1$;
        \item $|Du_j-q_j|^{\gamma}F_j(D^2u_j, x) =f_j$ in $B_1$;
        \item $\|\beta_j\|_{L^{n,1}(B_1)} +\|f_j\|_{L^{n,1}(B_1)}  \to 0 \quad \text{as $j \to \infty$}$;
    \item however,
    \begin{equation*}
        \sup_{B_{1/2}}|u_j-h| \geq \delta_0
    \end{equation*}
    for any $h$ satisfying \eqref{eq-compactness}.
    \end{enumerate}
    Here, $\beta_j$ denotes the modulus of oscillation of the coefficients of the operator $F_j$ as in \eqref{eq-def-beta}.
    Then by applying Theorem~\ref{lem-equicontinuity}, the sequence $\{u_j\}$ is pre-compact in $C(B_{2/3})$-topology.
    Thus, $u_j \to u_{\infty}$ locally uniformly (up to a subsequence) in $B_{2/3}$ for some $u_\infty \in C(B_{2/3})$. 

    It only remains to prove that the limit function $u_{\infty}$ is a solution of a constant coefficient, homogeneous, $(\lambda, \Lambda)$-uniformly elliptic equation \eqref{eq-compactness}. For this purpose, we divide our analysis into two cases.

    \textbf{Case 1:} If $|q_j|$ is bounded, then we can extract a subsequence of $\{q_j\}$ that converges to some $q_{\infty} \in \mathbb{R}^n$. Moreover, it follows from the uniform ellipticity that $F_j(\cdot, 0) \to \mathfrak{F}(\cdot)$ for some $(\lambda, \Lambda)$-uniformly elliptic operator $\mathfrak{F}$ up to a subsequence. We now claim that $u_{\infty}$ is a viscosity solution of    
    \begin{equation}\label{eq-claim}
        |Du_{\infty}-q_{\infty}|^{\gamma}\mathfrak{F}(D^2u_{\infty})=0.
    \end{equation}
    Once \eqref{eq-claim} is established, the cutting lemma (\cite[Lemma~6]{IS13}) yields that $u_{\infty}$ is a viscosity solution of $\mathfrak{F}(D^2u_{\infty})=0$, which contradicts (iv) for sufficiently large $j$.

    In order to prove \eqref{eq-claim}, we show that $u_{\infty}$ is a viscosity subsolution (the supersolution case is analogous).
    Let $P$ be a paraboloid that (strictly) touches $u_{\infty}$ by above in $B_r(x_0)$ at $x_0$, where $B_{2r}(x_0) \subset B_{1/2}$. Suppose, for the sake of contradiction, that
    \begin{equation*}
        |DP(x_0)-q_{\infty}|^{\gamma}\mathfrak{F}(D^2P)=-\eta <0.
    \end{equation*}
     By the continuity and the uniform convergence, and by choosing smaller $r$ if necessary, there exist constants $a>0$ and $A>0$ such that 
        \begin{equation*}
           a \leq  |DP(x)-q_{\infty}| \leq A \quad \text{for any $x \in B_r(x_0)$}
        \end{equation*}
        and
        \begin{equation*}
           a \leq  |DP(x)-q_{j}| \leq A \quad \text{for any $x \in B_r(x_0)$}
        \end{equation*}
    for all $j$ large enough.
    For $\varepsilon_j \coloneqq |\mathfrak{F}(D^2P)-F_j(D^2P, 0)|$, which vanishes as $j \to \infty$, we construct a corrector $\psi_j$ by solving the Dirichlet problem:
    \begin{equation*}
    \left\{
        \begin{aligned}
            \mathcal{M}^+_{\lambda^{\ast}, 1}(D^2\psi_j)&=-|f_j(x)|-|\beta_{j}(x)|-\varepsilon_j\eqqcolon g_j(x) &&\text{in $B_{2r}(x_0)$},\\
            \psi_j&=0 && \text{on $\partial B_{2r}(x_0)$}.
        \end{aligned}
        \right.
    \end{equation*}
    Since $g_j$ is continuous, the standard Perron's method guarantees the existence of a solution $\psi_j$. Since $\mathcal{M}^+$ is convex and $g_j$ is is assumed to be H\"older continuous \textit{a priori}, we have $\psi_j \in C^2(B_{2r}(x_0))$ and it satisfies
    \begin{equation*}
        \|(D^2\psi_j)^-\| \geq \frac{1}{n\lambda^{\ast}} \left(\|(D^2\psi_j)^+\|+|f_j|+|\beta_j|+\varepsilon_j \right).
    \end{equation*}
    Moreover, the borderline estimate (\cite[Theorem~1.2 and Theorem~2.1]{DKM14}) gives that $\psi_j$ satisfies the uniform estimate 
    \begin{equation*}
        \|\psi_j\|_{C^1(B_{r}(x_0))} \leq C(\|\psi_j\|_{L^{\infty}(B_{2r}(x_0))}+\|g_j\|_{L^{n,1}(B_{2r}(x_0))}) \leq C\|g_j\|_{L^{n,1}(B_{2r}(x_0))},
    \end{equation*}
    where the second inequality follows from the ABP estimate (see \cite[Theorem~3.2]{CC95} for example). Then it follows from the condition (iii) that
    \begin{equation*}
        \|\psi_j\|_{C^1(B_r(x_0))} \to 0 \quad \text{as $j \to \infty$}.
    \end{equation*}
    In particular, we have that for sufficiently large $j$, 
    \begin{equation*}
        \|D\psi_j\|_{L^{\infty}(B_{r}(x_0))} \leq a/2,
    \end{equation*}
    which implies
    \begin{equation*}
        |D(P+\psi_j)(x)-q_j| \geq |DP(x)-q_j|-|D\psi_j(x)| \geq a/2 \quad \text{in $B_{r}(x_0)$}.
    \end{equation*}
    Therefore, we arrive at
    \begin{equation*}
    \begin{aligned}
        F_j(D^2(P+\psi_j), x) &\leq F_j(D^2P, x)+\mathcal{M}^+_{\lambda, \Lambda}(D^2\psi_j) \\
        &\leq F_j(D^2P, 0)+\|D^2P\| \beta_j(x)+\Lambda\|(D^2\psi_j)^+\|-\lambda \|(D^2\psi_j)^-\|\\
        &\leq \mathfrak{F}(D^2P)+\varepsilon_j+\|D^2P\| \beta_j(x)+\Lambda\|(D^2\psi_j)^+\| \\
        &\qquad -\frac{\lambda}{n\lambda^{\ast}} \left(\|(D^2\psi_j)^+\|+|f_j(x)|+|\beta_j(x)|+\varepsilon_j \right)\\
        &\leq \mathfrak{F}(D^2P)-(2/a)^{\gamma}|f_j(x)| \, (<0) \quad \text{in $B_r(x_0)$}
    \end{aligned}
    \end{equation*}
    if we take $\lambda^{\ast}<1$ small enough. We next multiply $|D(P+\psi_j)(x)-q_j|^{\gamma} (\geq (a/2)^{\gamma})$ on both sides to obtain that
    \begin{equation}\label{eq-contradiction}
        \begin{aligned}
            |D(P+\psi_j)(x)-q_j|^{\gamma}F_j(D^2(P+\psi_j), x) &\leq (a/2)^{\gamma}\mathfrak{F}(D^2P)-|f_j(x)|\\
            &\leq (a/2A)^{\gamma} |DP(x_0)-q_{\infty}|^{\gamma}\mathfrak{F}(D^2P)-|f_j(x)|\\
            &=-(a/2A)^{\gamma}\eta-|f_j(x)|<f_j(x).
        \end{aligned}
    \end{equation}
    On the other hand, since $u_j \to u_{\infty}$ and $\psi_j \to 0$ uniformly in $B_r(x_0)$, and $P$ strictly touches $u_{\infty}$ by above at $x_0$ in $B_r(x_0)$, there exists a constant $C_j>0$ such that $P+\psi_j+C_j$ touches $u_j$ by above at some $x_j \in B_r(x_0)$ in $B_r(x_0)$. Thus, we have
    \begin{equation*}
        |D(P+\psi_j)(x_j)-q_j|^{\gamma}F_j(D^2(P+\psi_j), x_j) \geq f_j(x_j),
    \end{equation*}
    which directly contradicts \eqref{eq-contradiction} at the point  $x_j$.\newline

    \textbf{Case 2: }If $|q_j|$ is unbounded, then taking a subsequence, if necessary, we have $|q_j| \to \infty$. In this case, we define $e_j=q_j/|q_j|$ so that $u_j$ satisfies
    \begin{equation*}
        \left|\frac{Du_j}{|q_j|}-e_j\right|^{\gamma}F_j(D^2u_j, x)=\frac{f_j}{|q_j|^{\gamma}}.
    \end{equation*}
    By letting $j \to \infty$ and taking further subsequence, if necessary, we use the same
    argument with the previous case to find a limit function $u_{\infty}$ satisfying $\mathfrak{F}(D^2u_{\infty})=0$ for some $(\lambda, \Lambda)$-uniformly elliptic operator $\mathfrak{F}$. It again contradicts the condition (iv) for sufficiently large $j$, which finishes the proof.
\end{proof}

\begin{remark}
    For the proof of Lemma~\ref{lem-approximation}, the Lorentz-type conditions $\beta \in L^{n, 1}$ and $f \in L^{n, 1}$ were used for guaranteeing the $C^1$ estimate of the corrector $\psi_j$, so that $D\psi_j \to 0$ as $j\to \infty$. 
    
    On the other hand, there exists a function $f \in L^n(B_1)$ such that the Poisson equation $\Delta u=f$ admits a \emph{non-Lipschitz} solution $u$. In particular, one cannot expect a similar construction for the corrector $\psi_j$ to remain valid for the critical case $\beta \in L^n(B_1)$ and $f \in L^n(B_1)$. 
\end{remark}

\section{\texorpdfstring{$C^{1,\alpha}$}{C1a} regularity} \label{sec4}
In this section, we prove the optimal $C^{1, \alpha}$ regularity of viscosity solutions when $f \in L^p$ with $p>n$. We begin with the first step of the iteration argument.

\begin{lemma} \label{Capprolem}
    Suppose that $f \in L^p(B_1) \cap C(B_1)$ with $p>n$. 
    Let $q \in \mathbb{R}^n$ be an arbitrary vector and let $u \in C(B_1)$ be a viscosity solution of 
    \begin{equation*}
        |Du-q|^{\gamma}F(D^2u, x)=f(x)  \quad \text{in $B_1$},
    \end{equation*}
    satisfying $\|u\|_{L^{\infty}(B_1)} \leq 1$. Given 
    \begin{equation*}
        \alpha \in (0, \alpha_0) \cap \left(0, \frac{1-n/p}{1+\gamma}\right],
    \end{equation*}
    there exist constants $\rho \in (0, 1/4)$ and $\eta>0$ depending only on $n$, $p$, $\lambda$, $\Lambda$, $\gamma$ and $\alpha$ such that if
    \begin{equation*}
        \left(\int_{B_1}\beta^p(x) \, dx\right)^{1/p}\leq \eta \quad \text{and} \quad \left(\int_{B_1}|f(x)|^p \,dx\right)^{1/p} \leq \eta,
    \end{equation*}
    then there exists an affine function $l(x) \coloneqq a+{b} \cdot x$ satisfying
    \begin{equation*}
        \sup_{B_{\rho}}|u(x)-l(x)| \leq \rho^{1+\alpha}
    \end{equation*}
    and
    \begin{equation*}
        |a|+|{b}| \leq C(n, \lambda, \Lambda).
    \end{equation*}
\end{lemma}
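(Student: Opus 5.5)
The plan is to apply the approximation Lemma~\ref{lem-approximation} and transfer the $C^{1,\alpha_0}$ regularity of the approximating solution to $u$ at a small scale $\rho$. The first task is to move from the $L^p$ hypotheses to the $L^{n,1}$ hypotheses of Lemma~\ref{lem-approximation}. Since $p>n$ and $|B_1|<\infty$, the embedding $\|g\|_{L^{n,1}(B_1)}\lesssim \|g\|_{L^p(B_1)}$ from Section~\ref{sec-preliminaries} gives
\begin{equation*}
\|\beta\|_{L^{n,1}(B_1)}+\|f\|_{L^{n,1}(B_1)} \le C(n,p)\,\eta .
\end{equation*}
So once $\delta$ is fixed, taking $\eta$ small makes the lemma applicable.

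Given $\delta>0$ (to be chosen), Lemma~\ref{lem-approximation} provides $h$ with $\mathfrak{F}(D^2h)=0$ in $B_{1/2}$ and $\|u-h\|_{L^\infty(B_{1/2})}\le\delta$. We may take $\delta\le1$, so $\|h\|_{L^\infty(B_{1/2})}\le 2$. Theorem~\ref{thm-KS}, applied after rescaling from $B_{1/2}$ to $B_1$, then gives
\begin{equation*}
\|h\|_{C^{1,\alpha_0}(\overline{B_{1/4}})}\le C_0(n,\lambda,\Lambda).
\end{equation*}
Set $l(x)=h(0)+Dh(0)\cdot x$. Then $|a|+|b|\le C_0$, which is the required universal bound, and
\begin{equation*}
\sup_{B_\rho}|h-l|\le C_0\rho^{1+\alpha_0} \quad \text{for } \rho<1/4 .
\end{equation*}
By the triangle inequality, $\sup_{B_\rho}|u-l|\le \delta + C_0\rho^{1+\alpha_0}$.

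It remains to fix the constants in the right order. Because $\alpha<\alpha_0$ strictly, we can choose $\rho\in(0,1/4)$ so small that $C_0\rho^{1+\alpha_0}\le \tfrac12\rho^{1+\alpha}$, i.e. $\rho^{\alpha_0-\alpha}\le 1/(2C_0)$. Then set $\delta=\tfrac12\rho^{1+\alpha}$, and let $\eta$ be the constant that Lemma~\ref{lem-approximation} assigns to this $\delta$, divided by $C(n,p)$ from the embedding. All constants then depend only on $n,p,\lambda,\Lambda,\gamma,\alpha$.

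The only subtle point is this ordering: $\rho$ depends on $\alpha$ and must be chosen before $\delta$ and $\eta$. At this step the constraint $\alpha\le(1-n/p)/(1+\gamma)$ plays no role; it is needed only later, in the rescaling of the iteration, so that the rescaled $L^p$ norm of $f$ (with the factor $|Du-q|^\gamma$ accounted for) stays below $\eta$. Everything else follows directly from Lemma~\ref{lem-approximation}, whose hypotheses are met uniformly in $q$.
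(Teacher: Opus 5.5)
Your proposal is correct and follows essentially the same argument as the paper. Both approximate $u$ by $h$ via Lemma~\ref{lem-approximation}, using $L^p\subset L^{n,1}$ on $B_1$, and take $l(x)=h(0)+Dh(0)\cdot x$ from the $C^{1,\alpha_0}$ estimate. Both fix $\rho$ through $\rho^{\alpha_0-\alpha}\le 1/(2C_0)$ before setting $\delta=\tfrac12\rho^{1+\alpha}$ and then $\eta$. You also apply the embedding to $\beta$ explicitly and enforce $\rho<1/4$, which the paper leaves implicit.
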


\begin{proof}
    For $\delta>0$ to be determined later, let $h$ be a solution of a constant coefficient, homogeneous, $(\lambda, \Lambda)$-uniformly elliptic equation such that $\|u - h\|_{L^\infty(B_{1/2})} \leq \delta$. The existence of such a function is guaranteed by Lemma~\ref{lem-approximation} (since $\|f\|_{L^{n, 1}(B_1)} \lesssim \|f\|_{L^p(B_1)}$), provided that $\eta$ is chosen small enough depending on $\delta$ and universal parameters. Since our choice for $\delta$ will depend only on universal parameters, the choice of $\eta$ will be universal too.

    From the normalization of $u$, we have $\|h\|_{L^{\infty}(B_{1/2})} \leq 2$. Consequently, by the $C^{1, \alpha_0}$ regularity of $h$ (Theorem~\ref{thm-KS}), we obtain that
    \begin{equation*}
        \sup_{B_r}|h(x)-(h(0)+Dh(0)\cdot x)| \leq C(n, \lambda, \Lambda) \cdot r^{1+\alpha_0} \quad \text{for any $r \in (0, 1/4)$}
    \end{equation*}
    and
    \begin{equation*}
        |h(0)|+|Dh(0)| \leq C(n, \lambda, \Lambda).
    \end{equation*}
    Let us define
    \begin{equation*}
        l(x)=h(0)+Dh(0) \cdot x.
    \end{equation*}
    It immediately follows that
    \begin{equation*}
        \sup_{B_{\rho}} |u(x)-l(x)| \leq \delta+C(n, \lambda, \Lambda) \cdot \rho^{1+\alpha_0}.
    \end{equation*}
    Now, for a fixed exponent $\alpha<\alpha_0$, we choose $\rho$ and $\delta$ as
    \begin{equation*}
        \rho \coloneqq \left(\frac{1}{2C(n, \lambda, \Lambda)} \right)^{\frac{1}{\alpha_0-\alpha}} \quad \text{and} \quad \delta \coloneqq \frac{1}{2}\rho^{1+\alpha}.
    \end{equation*}
    Therefore, substituting these choices into the previous estimate yields
    \begin{equation*}
        \sup_{B_{\rho}} |u(x)-l(x)| \leq \rho^{1+\alpha}
    \end{equation*}
    as desired.
\end{proof}

\begin{proof}[Proof of Theorem~\ref{thm-main-p>n}]
By scaling, we may assume that $\|u\|_{L^\infty} \leq 1$ and $\|f\|_{L^p}\leq \eta$, where $\eta$ is the constant from Lemma~\ref{Capprolem}.  
We claim that there exists a sequence of affine functions $l_k(x)=a_k+b_k\cdot x$ satisfying
\begin{equation} \label{lkcond1}
    \sup_{B_{\rho^k}}|u(x)-l_k(x)| \leq \rho^{k(1+\alpha)},
\end{equation}
where $\rho\in(0, 1/4)$ is the radius from Lemma~\ref{Capprolem}.

We proceed by induction on $k$.
The base case $k=1$ is verified by Lemma \ref{Capprolem}.
Assuming that the claim holds for some $k$, we define
\begin{equation*}
    v(x)=\frac{(u-l_k)(\rho^kx)}{\rho^{k(1+\alpha)}}.
\end{equation*}
Then $|v| \leq 1$ in $B_1$ and $v$ satisfies 
\begin{equation*}
    |Dv-q_k|^\gamma F_k(D^2v,x) = f_k,
\end{equation*}
where
\begin{equation*}
    F_k(M,x) = \rho^{k(1-\alpha)}F({\rho^{-k(1-\alpha)}}M, \rho^kx), \quad q_k =-\rho^{-k\alpha} b_k \quad \text{and} \quad f_k(x) = \rho^{k(1-\alpha(1+\gamma))}f(\rho^kx).
\end{equation*}
Note that $F_k$ inherits the ellipticity of $F$ and $\beta_{F_k}(x) = \beta(\rho^kx)$.
Therefore, we obtain
\begin{equation*}
    \begin{aligned}
          \fint_{B_1}\beta_{F_k}^p(x) \, dx  &= \fint_{B_{\rho^k}}\beta^p(x) \, dx \, \leq \eta^p, \\
    \int_{B_1}|f_k(x)|^p \, dx  &=  \rho^{(p(1-\alpha(1+\gamma))-n)k} \int_{B_{\rho^k}}|f(x)|^p  \, dx \leq \eta^p,
    \end{aligned}
\end{equation*}
where we used the fact that $p(1-\alpha(1+\gamma))-n\geq0$.
Applying Lemma \ref{Capprolem} to $v$, there exists an affine function $\tilde{l}_k(x) =\tilde{a}_k+ \tilde{b}_k\cdot x$ such that 
\begin{equation*}
     \|v-\tilde{l}_k\|_{L^\infty(B_\rho)} \leq \rho^{1+\alpha} \quad  \text{ and} \quad |\tilde{a}_k|+|\tilde{b}_k| \leq C.
\end{equation*}
Setting $l_{k+1}(x) = l_k(x) + \rho^{k(1+\alpha)} \tilde{l}_k(\rho^{-k}x)$, we see that $l_{k+1}$ satisfies \eqref{lkcond1} for $k+1$, completing the induction.
Furthermore, the coefficients satisfy
\begin{equation*}
    |a_{k+1}-a_k| \leq C\rho^{k(1+\alpha)} \quad \text{and} \quad |b_{k+1}-b_k| \leq C\rho^{k\alpha},
\end{equation*}
which implies that $a_k$ and $b_k$ converge and $l_\infty =\lim_{k\rightarrow \infty} l_k$
becomes the affine approximation of $u$ at 0.
By the standard Schauder-type argument and covering argument, we conclude that $u$ is $C^{1,\alpha}$ in $B_{1/2}$ with the desired uniform estimate.
\end{proof}

\section{Log-Lipschitz regularity} \label{sec5}
In this section, we prove the log-Lipschitz regularity of viscosity solutions $u$, when the nonhomogeneous term $f$ belongs to the Lorentz space, i.e., $f \in L^{n, 1}(B_1)$.
\begin{lemma} \label{logapprolem}
    Let $u \in C(B_1)$ be a viscosity solution of 
    \begin{equation*}
            |Du-q|^\gamma F(D^2u,x) =f \quad \text{ in } B_1,
    \end{equation*}
    satisfying $\|u\|_{L^\infty(B_1)} \leq 1$.
    Then there exist $\rho \in (0, 1/4)$ and $\eta>0$ depending only on $n$, $\lambda$, $\Lambda$ and $\gamma$ such that if
    \begin{equation*}
        \|\beta\|_{L^{n, 1}(B_1)} \leq \eta \quad \text{and} \quad \|f\|_{L^{n, 1}(B_1)} \leq \eta,
    \end{equation*}
    then there exists an affine function $l(x) = a+b\cdot x$ such that 
    \begin{equation*}
        \|u-l\|_{L^\infty(B_{\rho})} \leq \rho
    \end{equation*}
    and
    \begin{equation*}
        |a|+|b| \leq C(n, \lambda, \Lambda).
    \end{equation*}
\end{lemma}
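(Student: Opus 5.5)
This is essentially the same as Lemma~\ref{Capprolem}, with the target exponent $1+\alpha$ replaced by $1$. The approximation lemma (Lemma~\ref{lem-approximation}) is already formulated for $L^{n,1}$ data, so it applies directly. I would argue as follows.

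Fix $\delta>0$, to be chosen universally at the end. Let $\eta=\eta(n,\lambda,\Lambda,\gamma,\delta)$ be the constant from Lemma~\ref{lem-approximation}. Under the hypotheses $\|\beta\|_{L^{n,1}(B_1)}\le\eta$ and $\|f\|_{L^{n,1}(B_1)}\le\eta$, that lemma gives a function $h$ solving $\mathfrak{F}(D^2h)=0$ in $B_{1/2}$, with $\mathfrak{F}$ a constant-coefficient $(\lambda,\Lambda)$-uniformly elliptic operator, such that $\|u-h\|_{L^\infty(B_{1/2})}\le\delta$. Since $\|u\|_{L^\infty(B_1)}\le 1$, we get $\|h\|_{L^\infty(B_{1/2})}\le 2$ once $\delta\le1$. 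Theorem~\ref{thm-KS}, applied after rescaling from $B_{1/2}$ to $B_1$, then yields
\begin{equation*}
\sup_{B_r}|h(x)-h(0)-Dh(0)\cdot x|\le C_1 r^{1+\alpha_0}\quad\text{for } r\in(0,1/4),
\end{equation*}
together with $|h(0)|+|Dh(0)|\le C_1$, where $C_1=C_1(n,\lambda,\Lambda)$.

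Now set $l(x)=h(0)+Dh(0)\cdot x$, so that $a=h(0)$, $b=Dh(0)$ and $|a|+|b|\le C_1$. By the triangle inequality,
\begin{equation*}
\sup_{B_\rho}|u-l|\le \delta+C_1\rho^{1+\alpha_0}.
\end{equation*}
Choose $\rho\in(0,1/4)$ with $C_1\rho^{\alpha_0}\le 1/2$, that is, $\rho\le (2C_1)^{-1/\alpha_0}$; this gives $C_1\rho^{1+\alpha_0}\le\rho/2$. Then take $\delta=\rho/2$. With these choices $\sup_{B_\rho}|u-l|\le\rho$. Since $\rho$ depends only on $n,\lambda,\Lambda$, so does $\delta$, and therefore $\eta$ depends only on $n,\lambda,\Lambda,\gamma$, as the statement requires.

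I do not expect a real obstacle in this lemma itself. The only point to check is that Lemma~\ref{lem-approximation} applies with $L^{n,1}$ smallness of $\beta$ and $f$, and it is stated in exactly that form. Note also that the target $\rho$ has the same order as the linear scale, so any positive gain from $\alpha_0$ suffices and no restriction on an exponent appears. The real difficulty in this section will come later, in the iteration. There the rescaled data $f_k$ and slopes $q_k$ are controlled only through a sum of $L^{n,1}$ norms over dyadic balls, and the slopes $b_k$ may drift by $O(1)$ at each step. This drift is what produces the logarithmic factor $-t\log t$ rather than a Lipschitz bound.
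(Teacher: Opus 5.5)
Your proposal is correct and matches the paper's proof essentially step for step: apply Lemma~\ref{lem-approximation} to get $h$ with $\|u-h\|_{L^\infty(B_{1/2})}\le\delta$, take $l(x)=h(0)+Dh(0)\cdot x$ from the $C^{1,\alpha_0}$ estimate of Theorem~\ref{thm-KS}, and choose $\rho$ so that $C\rho^{1+\alpha_0}\le\rho/2$, with $\delta=\rho/2$. One aside in your closing remark about the iteration is slightly off, though it does not affect the lemma: the paper does not need a sum over dyadic balls, because $\|f_k\|_{L^{n,1}(B_1)}=\|f\|_{L^{n,1}(B_{\rho^k}(y))}\le\eta$ follows directly from the scale invariance of the $L^{n,1}$ norm under $f\mapsto\rho^k f(y+\rho^k\cdot)$.
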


\begin{proof}
For given $\delta>0$, an application of Lemma~\ref{lem-approximation} guarantees the existence of a function $h \in C^{1,\alpha_0}_{\mathrm{loc}}(B_{1/2})$ which is a solution of the homogeneous equation and $\|u-h\|_{L^\infty(B_{1/2})} \leq \delta$.

Letting $l(x)= h(0) + Dh(0) \cdot x$, $C^{1,\alpha_0}$ estimates of $h$ (Theorem~\ref{thm-KS}) yield that
\begin{equation*}
    \sup_{B_\rho}|h(x)-l(x)| \leq C\rho^{1+\alpha_0} \quad \text{and} \quad |h(0)| + |Dh(0)| \le C.
\end{equation*}
Choosing $\rho \in (0, 1/4)$ small enough so that $C\rho^{1+\alpha_0} \leq \rho/2$ and $\delta = \rho/2$, we conclude the proof.
\end{proof}

\begin{proof}[Proof of Theorem~\ref{thm-main-p=n}]
By scaling, we may assume that $\|u\|_{L^\infty(B_1)} \leq 1$ and $\|f\|_{L^{n,1}(B_1)}\leq \eta$, where $\eta$ is the constant from Lemma~\ref{logapprolem}.  
Fix $y \in B_{1/2}$.
We claim that there exists a sequence of affine functions $l_k(x)=a_k+b_k\cdot (x-y)$ satisfying
\begin{equation} \label{lkcond2}
    \sup_{B_{\rho^k}(y)}|u(x)-l_k(x)| \leq \rho^k,
\end{equation}
where $\rho\in(0,1/4)$ is the radius from Lemma~\ref{logapprolem}.
We proceed by induction on $k$.
The base case $k=1$ follows directly from Lemma \ref{logapprolem}.
Assuming that the claim holds for some $k\geq1$, we consider
\begin{equation*}
    v(x)=\frac{(u-l_k)(y+\rho^kx)}{\rho^k}.
\end{equation*}
Then $|v| \leq 1$ in $B_1$ and $v$ satisfies 
\begin{equation*}
    |Dv-q_k|^\gamma F_k(D^2v,x) = f_k,
\end{equation*}
where
\begin{equation*}
    F_k(M,x) = \rho^kF(\rho^{-k}M, y+\rho^kx), \quad q_k =-b_k, \quad \text{and} \quad f_k(x) = \rho^kf(y+\rho^kx).
\end{equation*}
Note that $F_k$ inherits the ellipticity of $F$ and $\beta_{F_k}(x, 0) = \beta(y+\rho^kx,y)$.
Therefore, we have
\begin{equation*}
\begin{aligned}
    \|\beta_{F_k}\|_{L^{n,1}(B_1)} &= \rho^{-k}\|\beta(\cdot, y)\|_{L^{n,1}(B_{\rho^k}(y))} \leq \eta,\\
    \|f_k\|_{L^{n,1}(B_1)} &= \|f\|_{L^{n,1}(B_{\rho^k}(y))} \leq \eta.
\end{aligned}
\end{equation*}
Applying Lemma \ref{logapprolem} to $v$, there exists an affine function $\tilde{l}_k(x) =\tilde{a}_k+ \tilde{b}_k\cdot x$ such that 
\begin{equation*}
     \|v-\tilde{l}_k\|_{L^\infty(B_\rho)} \leq \rho \quad  \text{ and} \quad |\tilde{a}_k|+|\tilde{b}_k| \leq C.
\end{equation*}
We define $a_{k+1} = a_k + \rho^k\tilde{a}_k$ and $b_{k+1} = b_k + \tilde{b}_k$. Then $l_{k+1}(x) = a_{k+1} + b_{k+1} \cdot (x - y)$ satisfies \eqref{lkcond2} for $k+1$, completing the induction.
Moreover, we have
\begin{equation} \label{abcond}
    |a_{k+1}-a_k| \leq C\rho^k, \quad \text{and} \quad |b_{k+1}-b_k| \leq C.
\end{equation}
By \eqref{lkcond2} and \eqref{abcond}, $a_k$ converges to $u(y)$ with
\begin{equation*}
    |u(y)-a_k| \leq \frac{C}{1-\rho}\rho^k.
\end{equation*}
On the other hand, even though $b_k$ may not converge, we have
\begin{equation*}
    |b_k| \leq \sum_{j=0}^k|b_j-b_{j-1}| \leq Ck.
\end{equation*}
For $0<r<\rho$, let $k$ be the integer satisfying $\rho^{k+1} < r \leq \rho^k$.
Then
\begin{equation*}
    \begin{aligned}
         \sup_{x\in B_r(y)}|u(x)-u(y)| &\leq \sup_{x\in B_{\rho^k}(y)}|u(x)-l_k| + |u(y)-a_k| + |b_k|\rho^k \\
    &\leq C(\rho^k+k\rho^k) \\
    &\leq \frac{C}{\rho}\left(r+\frac{\log r}{\log \rho} r\right) \\
    &\leq -Cr\log r,
    \end{aligned}
\end{equation*}
where $C=C(n, \lambda, \Lambda, \gamma)$. This completes the proof. 
\end{proof}

\bibliographystyle{alpha}
\bibliography{literature}
\end{document}